\documentclass[11pt,reqno]{amsproc}
\usepackage{amsmath,amsfonts,amssymb,amsthm}
\usepackage[abbrev,lite,nobysame]{amsrefs}
\usepackage{mathrsfs}
\usepackage{graphics,graphicx}
\usepackage[dvipsnames]{xcolor}
\usepackage[margin=1in]{geometry}

\usepackage{enumitem}
\usepackage[colorlinks=true, pdfstartview=FitV, linkcolor=BrickRed,citecolor=black, urlcolor=black]{hyperref}

%%%%

\usepackage[centercolon]{mathtools}
\usepackage{dsfont}
\usepackage{soul}

\DeclareMathAlphabet{\mathup}{OT1}{\familydefault}{m}{n}
\newcommand{\dd}[1]{\mathop{}\!\mathup{d} #1}

\renewcommand{\div}[1]{\mathop{}\!\mathup{div} #1}

\newcommand{\Q}{\mathbb{Q}}
\newcommand{\R}{\mathbb{R}}
\newcommand{\Co}{\mathbb{C}}
\newcommand{\T}{\mathbb{T}}

\newcommand{\Prob}{\mathbb{P}}

\newcommand\e{{\rm e}}

\renewcommand{\Re}{\mathrm{Re}}

\newcommand{\eps}{\varepsilon}

\newtheorem{proposition}{Proposition}[section]
\newtheorem{theorem}{Theorem}[section]
\newtheorem{lemma}{Lemma}[section]

\theoremstyle{definition}

\theoremstyle{remark}
\newtheorem{remark}{Remark}[section]

\newcommand{\TC}{\mathrm{TC}}

% upright differentials with right spacing 
\DeclareMathAlphabet{\mathup}{OT1}{\familydefault}{m}{n}

%\newcommand{\arg}[1]{\mathop{}\!\mathup{arg} #1}

% extra abbreviations

\numberwithin{equation}{section}

% Labels in itemize
\makeatletter
\def\namedlabel#1#2{\begingroup
    #2%
    \def\@currentlabel{#2}%
    \phantomsection\label{#1}\endgroup
}
\makeatother

% TOC FORMATTING
\makeatletter

%subsection formatting
\renewcommand\subsubsection{\@startsection{subsubsection}{3}%
\normalparindent{.5\linespacing\@plus.7\linespacing}{-.5em}
{\normalfont\bfseries}}

%tableofcontents [put \hfil instead of \dotfill for not dots]
\def\@tocline#1#2#3#4#5#6#7{\relax
  \ifnum #1>\c@tocdepth % then omit
  \else
    \par \addpenalty\@secpenalty\addvspace{#2}%
    \begingroup \hyphenpenalty\@M
    \@ifempty{#4}{%
      \@tempdima\csname r@tocindent\number#1\endcsname\relax
    }{%
      \@tempdima#4\relax
    }%
    \parindent\z@ \leftskip#3\relax \advance\leftskip\@tempdima\relax
    \rightskip\@pnumwidth plus4em \parfillskip-\@pnumwidth
    #5\leavevmode\hskip-\@tempdima
      \ifcase #1
       \or\or \hskip 1em \or \hskip 2em \else \hskip 3em \fi%
      #6\nobreak\relax
    \dotfill\hbox to\@pnumwidth{\@tocpagenum{#7}}\par
    \nobreak
    \endgroup
  \fi}
\makeatother

\title[Nonlinear instability for 3D MHD around Taylor--Couette]{Nonlinear instability for the 3D MHD equations around the Taylor--Couette flow}

\author[V. Navarro-Fernández]{Víctor Navarro-Fernández}
\address{(VNF) Department of Mathematics, Imperial College London, London, SW7 2AZ, UK}
\email{v.navarro-fernandez@imperial.ac.uk}

\author[D. Villringer]{David Villringer}
\address{(DV) Department of Mathematics, Imperial College London, London, SW7 2AZ, UK}
\email{d.villringer22@imperial.ac.uk}

\subjclass[2020]{76E25, 76W05, 35Q30}

\keywords{Nonlinear instability, magnetohydrodynamics, Taylor--Couette flow}

\date{\today}

\begin{document}

\begin{abstract}
We study the 3D magnetohydrodynamics (MHD) equations in an annular cylinder, perturbed around the explicit steady state given by the 3D Taylor--Couette velocity field and zero magnetic field. Combining a recent linear instability result for the magnetic field with the framework of Friedlander, Pavlovi\'c and Shvydkoy [\emph{Comm.\ Math.\ Phys.}\ 264 (2006), no.\ 2, 335--347], we prove nonlinear instability of the solution around this steady state in $L^p$, for any $p>1$. In particular, our results are, to the best of our knowledge, the first rigorous instability results for 3D MHD without forcing, in which the instability is produced \emph{as a result} of the (exponential) growth of the magnetic field. Furthermore, we offer a mathematical proof of the physically conjectured transfer of energy from the velocity field to the magnetic field in the MHD system.
\end{abstract}

\maketitle

\tableofcontents

\section{Introduction}
Consider the three-dimensional incompressible magnetohydrodynamics (MHD) equations describing the evolution of a magnetic field $B(t,x)\in\R^3$ generated by an electrically charged fluid moving with velocity $u(t,x)\in\R^3$,
\begin{equation}\label{eq:mhd}
\begin{split}
\partial_t u+(u \cdot \nabla) u + \nabla p & = \nu \Delta u+(B \cdot \nabla) B, \\
\partial_t B +(u \cdot \nabla) B-(B \cdot \nabla) u & = \eps \Delta B, \\
\div(u) & = 0, \\
\div(B) & = 0,
\end{split} 
\end{equation}
with $(t,x)\in (0,\infty)\times\Omega$.
Here $p(t,x)$ denotes the pressure of the fluid, $\nu>0$ its kinematic viscosity, and $\eps>0$ the magnetic resistivity. The fluid is contained in a periodic annular cylinder, that we denote by $\Omega\subset\R^3$, which in cylindrical coordinates is defined in terms of the inner and outer radii $0<R_1<R_2$ by
\begin{equation}\label{eq:domain}
\Omega = \{(r,\theta,z)\in (0,\infty)\times\T\times\T: R_1\leq r\leq R_2\}.  
\end{equation}
The boundaries of this annular cylinder are physical---thus, no fluid can go across---rotate with a constant speed, and move vertically at a constant relative velocity. Upon picking a suitable reference frame, we can always assume that the cylinder corresponding to the boundary $r=R_1$ is stationary, whereas the cylinder corresponding to $r=R_2$ is moving. This, on the one hand, translates into the following boundary conditions for the velocity field
\begin{equation}\label{eq:BC-u}
u\cdot \hat n|_{\partial \Omega}=0, \quad u_\theta|_{r=R_1}=u_z|_{r=R_1}=0, \quad u_\theta|_{r=R_2}=\beta_1, \quad u_z|_{r=R_2}=\beta_2,
\end{equation}
where $\beta_1,\beta_2\in \R\setminus\{0\}$, and $\hat n$ denotes the unit normal outwards vector to the boundary $\partial \Omega$. On the other hand, for the magnetic field we consider the boundary conditions of a perfectly conducting wall, which are modelled by
\begin{equation}\label{eq:BC-B}
B \cdot \hat n|_{\partial \Omega}=0, \quad \hat n \times (\nabla \times B)|_{\partial \Omega}=0.
\end{equation}
Provided that we take some initial configuration $(u_0,B_0)$ in a suitable Banach space, e.g.\ $L^p_\omega(\Omega)$---the space of divergence-free vector valued functions in $L^p(\Omega)$---we find that \eqref{eq:mhd}--\eqref{eq:BC-B} defines a locally well-posed Cauchy problem, see \cites{DuvautLions72,SermangeTemam1983}. For further information about the MHD equations and physically relevant boundary conditions, we refer to the monographs \cites{roberts1967,Davidson01}.

In this setting, we introduce a special stationary solution to equations \eqref{eq:mhd} with the boundary conditions \eqref{eq:BC-u}--\eqref{eq:BC-B}. We denote this solution by $(u_\TC,0)$, it consists of the trivial magnetic field $B=0$, together with the three-dimensional \emph{Taylor--Couette} radial vector field, which in cylindrical coordinates has the form
\begin{equation}\label{eq:TC}
    u_\TC(r) = \left(\frac{a_1}{r}+a_3r\right)\hat\theta + \left(a_2\log(r)+a_4\right)\hat z.
\end{equation}
The coefficients $a_1,a_2,a_3,a_4\in\R$, with $a_1,a_2\neq 0$, are chosen to satisfy the boundary conditions \eqref{eq:BC-u}. Given any viscosity parameter $\nu>0$, the Taylor--Couette velocity field \eqref{eq:TC} is an explicit solution to the Navier--Stokes equations in $\Omega$ with boundary conditions \eqref{eq:BC-u}, without forcing, and with a radial pressure term $p_\TC$ given by
\[
\partial_r p_\TC(r) = \frac{1}{r} (u_\TC)_\theta^2 = \frac{a_1^2}{r^3} + a_3^2r.
\]

The main goal of this note is to show that the MHD equations \eqref{eq:mhd} are \emph{nonlinearly unstable} around the steady state $(u_\TC,0)$ in $L^p$ for every $p\in (1,\infty)$. Nonlinear instability in this context refers to the existence of an initial configuration $(u_0,B_0)$ arbitrarily close to the steady state in a suitable Banach space, and a time $t_\star>0$ at which the corresponding solution $(u(t_\star),B(t_\star))$ has drifted a distance of order $1$ from the steady state. In our particular case we can allow for perturbations around $(u_\TC,0)$ that are small in any Sobolev space $W^{s,p}$ with $s\geq 0$ and $p\in (1,\infty)$, and we find a solution $(u,B)$ that departs from the steady state in $L^p$.
Our line of reasoning exploits the recent linear instability result for the magnetic field \cite{NFVillringer}---see details in Section \ref{s:linear}---and adapts the arguments from Friedlander, Pavlovi\'c and Shvydkoy \cite{Friedlander_Pavlović_Shvydkoy_2006} to the full MHD equations \eqref{eq:mhd} with the physically relevant boundary conditions \eqref{eq:BC-u}--\eqref{eq:BC-B}.

Nonlinear instability results for \eqref{eq:mhd} around any linearly unstable flow for the Navier--Stokes equations (combined with the trivial background magnetic field $B=0$) may immediately be deduced from corresponding linear instability via \cite{Friedlander_Pavlović_Shvydkoy_2006}. More popular in the literature seems to be the opposite question: are there stability thresholds for the MHD equations when perturbed around some steady state? This has been typically answered for the \emph{Couette flow}. In two-dimensions and Cartesian coordinates, the Couette flow in $\T\times\R$ has the form $u = (y,0)$. When combined with the constant magnetic field $B = (\beta,0)$, $\beta\neq 0$, it has been shown to be a steady state of the 2D MHD equations that is stable in $L^2$ under sufficiently small perturbations in $H^s$, with $s>0$ big enough \cites{Dolce24,Knobel25,WZ24+,JRW25}. Similar results but in Gevrey spaces exist if one of the diffusion parameters vanishes, i.e.\ $\nu=0$ \cites{KZ23,ZZ24}, $\eps=0$ \cite{DKZ24+}, or $\nu=\eps=0$ \cite{Knobel25b}. For the 3D MHD equations in $\T\times\R\times\T$, one can find analogous thresholds for the Couette flow $u=(y,0,0)$ together with a constant background magnetic field of the form $B=\alpha(\sigma,0,1)$, for $\alpha\in\R$ sufficiently large and $\sigma\in \R\setminus\Q$ satisfying some Diophantine condition---which has been shown to be stable in $L^2$ for certain regimes of the diffusion parameters $\nu,\eps>0$ \cites{Liss20,RZZ25,WXZ25+}. The stability of static steady states $u=0$ has also been studied together with a constant nontrivial magnetic fields in $\R^2$ or $\R^3$ \cites{BardosSulemSulem88,HeXuYu18}, or with a monotone shear magnetic field $B=(b(y),0)$ in $\T\times[0,1]$ \cite{RenZhao17}.

Closer to the purposes of this note, we mention the works of G\'erard-Varet and Rousset \cites{GerardVaret05,GerardVaretRousset07}, who proved nonlinear instability in $L^2$ for the 3D MHD equations that includes a forcing in the Navier--Stokes equations. They made use of the dynamo mechanism through which the action of a velocity field $u$ makes the magnetic field $B$ grow exponentially. We draw special attention to the paper \cite{GerardVaretRousset07}, where velocity fields of a similar form to those in \cite{NFVillringer} are considered. In particular, the authors define a sequence of vector fields $u^\kappa$ that converge formally to the discontinuous Ponomarenko velocity field as $\kappa \to 0$, see \cites{gilbert1988, NFVillringer}. Via the method of matched asymptotic expansions, they show that the MHD equations near this (forced) steady state are nonlinearly unstable, in the sense that a perturbation of size $O(\kappa^p)$, with $p>0$ and $\kappa\ll1$, grows to $O(1)$ in times $O(|\log(\kappa)|)$.

\subsection*{Notation}

We use the symbol $a\lesssim b$ to denote that there exists a constant $C>0$, independent of all the relevant quantities, such that $a\leq Cb$. Moreover, we use the symbol $a\sim b$ to denote that $a\lesssim b$ and $b\lesssim a$. In many different estimates we will use a generic constant $C>0$ that might change from line to line.
Given a Banach space $X$ of vector valued functions $w:\Omega\to\R^3$, we define
\[
X_\omega = \{ w\in X : \div(w) = 0\}.
\]
Given a function space $X$ and a vector valued function $w=(w_1,\hdots, w_n)$ such that $w_i\in X$ for all $1\leq i\leq n$, we abuse the notation and write $w\in X$.

\subsection{Main result}

As in \cites{GerardVaret05,GerardVaretRousset07}, we derive a nonlinear instability result via the dynamo mechanism, which provides exponential growth for the linearised magnetic field. In this regard, we write an equation for the perturbation $(v,B)=(u-u_\TC,B)$, which has the form
\[
\partial_t\begin{pmatrix}
    v\\
    B
\end{pmatrix} = A\begin{pmatrix}
    v\\
    B
\end{pmatrix} + \text{nonlinear terms}.
\]
Here, $A = A_1\oplus A_2$ defines a linear operator that can be written as the direct sum of the linearised Navier--Stokes operator $A_1$,
\[
A_1 v = \Prob(-(u_\TC\cdot\nabla) v - (v\cdot\nabla) u_\TC + \nu\Delta v),
\]
where $\Prob$ denotes the Leray projector onto the space of divergence-free vector fields, and the kinematic dynamo operator $A_2$,
\[
A_2 B = -(u_\TC\cdot\nabla) B + (B\cdot\nabla) u_\TC + \eps\Delta B.
\]
If we can show that either $A_1$ or $A_2$ admits a growing mode---as \cite{NFVillringer} ensures for kinematic dynamo operator $A_2$---we say that the corresponding equations are \emph{linearly unstable}. Then, via the arguments in \cite{Friedlander_Pavlović_Shvydkoy_2006}, we can upgrade the instability for the full nonlinear equations.

\begin{theorem}\label{theorem}
Let $u_\TC$ denote the Taylor--Couette velocity field \eqref{eq:TC}.
For any $s\geq 0$, $\nu>0$, and any $\eps>0$ small enough, there exist $\chi,c>0$ so that the following holds true: 
For all $\delta>0$, there exists an initial configuration $(u_0,B_0)\in W^{s,p}$ with 
\[
\left\|\begin{pmatrix}
    u_0\\
    B_0
\end{pmatrix}-\begin{pmatrix}
u_\TC\\
0
\end{pmatrix}\right\|_{W^{s,p}} \leq \delta,
\]
and a time $t_\star =c|\log(\delta)|$, so that the solution to the MHD equations \eqref{eq:mhd} satisfies
\begin{equation*}
\left\|\begin{pmatrix}
u(t_\star)\\
B(t_\star)
\end{pmatrix}-\begin{pmatrix}
u_\TC\\
0
\end{pmatrix}
\right\|_{L^p} \geq \chi.
\end{equation*}
Furthermore, if $\nu\gg\|u_\TC\|_{W^{1,\infty}}$, there holds
$\|B(t_\star)\|_{L^p}\geq \chi$.
\end{theorem}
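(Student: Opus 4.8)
The plan is to follow the Friedlander–Pavlović–Shvydkoy (FPS) bootstrap scheme, but applied to the decoupled linear operator $A = A_1 \oplus A_2$, where the instability is driven entirely by the growing mode of the kinematic dynamo operator $A_2$ supplied by \cite{NFVillringer}. First I would record the linear input: by \cite{NFVillringer} there exists an eigenfunction $B_\star$ of $A_2$ with eigenvalue $\lambda$ having $\Re\lambda = \mu > 0$ (for $\eps>0$ small), so $e^{tA_2}$ has spectral abscissa at least $\mu$. Meanwhile $A_1$ is the linearized Navier–Stokes operator around $u_\TC$; one does not need $A_1$ to be stable — it suffices that $e^{tA_1}$ has \emph{some} finite growth bound $\|e^{tA_1}\| \lesssim e^{\Lambda t}$ on $L^p_\omega$, which follows from sectoriality of the Stokes operator and boundedness of the lower-order terms. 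Set $\Lambda_{\max} = \max\{\mu, \Lambda\}$; the key point is that the $B$-component alone already grows like $e^{\mu t}$, so the semigroup $e^{tA}$ has growth rate $\geq \mu$, witnessed by the initial datum $(0, B_\star)$.

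The second step is the abstract nonlinear instability criterion. Writing the perturbation equation as $\partial_t W = A W + N(W)$ with $W = (v,B)$ and $N$ the collection of the quadratic MHD nonlinearities $\Prob((v\cdot\nabla)v), \Prob((B\cdot\nabla)B), (v\cdot\nabla)B, (B\cdot\nabla)v$, I would verify the two hypotheses of the FPS theorem in the $L^p_\omega$ setting: (i) the linear semigroup $e^{tA}$ has positive growth rate $\mu$ (just shown); (ii) the nonlinearity satisfies a Lipschitz-type bound $\|N(W_1)-N(W_2)\| \lesssim (\|W_1\|_{\X}+\|W_2\|_{\X})\|W_1-W_2\|_{\X}$ on a suitable space $\X$ continuously embedded in $L^p_\omega$ and controlled by the smoothing of $e^{tA}$ — concretely, using the parabolic gain of $e^{tA}$ from $L^p$ to $W^{1,p}$ (or to $L^q$ with $q>p$) to close the product estimates, exactly as in \cite{Friedlander_Pavlović_Shvydkoy_2006}. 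This requires knowing that both $A_1$ and $A_2$ generate analytic semigroups with the usual $t^{-1/2}$-type smoothing estimates on $\Omega$ with the boundary conditions \eqref{eq:BC-u}--\eqref{eq:BC-B}; for $A_1$ this is classical (Stokes operator), for $A_2$ it follows since $A_2$ is a bounded perturbation of $\eps\Delta$ with the perfectly-conducting-wall conditions \eqref{eq:BC-B}.

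Granting (i)–(ii), the FPS machinery produces, for each $\delta>0$, the initial datum $W_0 = \delta\, (0,B_\star)/\|B_\star\|_{W^{s,p}}$ (so $\|(u_0,B_0)-(u_\TC,0)\|_{W^{s,p}}\leq\delta$), and shows by a Duhamel/bootstrap argument that the solution stays within a factor $2$ of the linear evolution $\delta e^{tA}W_0$ up to the "escape time" $t_\star$ at which $\|\delta e^{t_\star A}W_0\|_{L^p}$ first reaches a fixed threshold $\chi_0$; since $\|\delta e^{tA}W_0\|_{L^p} \sim \delta e^{\mu t}$, this gives $t_\star = c|\log\delta|$ with $c = 1/\mu$ (up to constants), and $\|W(t_\star)\|_{L^p}\geq \chi := \chi_0/2$. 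Because the growing linear mode lives purely in the magnetic component, the lower bound is in fact realized by $\|B(t_\star)\|_{L^p}$ up to the error from the $v$-dynamics; to get the clean statement $\|B(t_\star)\|_{L^p}\geq\chi$ I would impose $\nu \gg \|u_\TC\|_{W^{1,\infty}}$, which makes $A_1$ strictly dissipative ($\Lambda < 0$), so $\|v(t)\|_{L^p}$ remains $o(\delta e^{\mu t})$ throughout $[0,t_\star]$ and all of the order-$\chi$ displacement is carried by $B$ — this is precisely the claimed transfer of energy from velocity to magnetic field. I expect the main obstacle to be purely technical: checking that the FPS nonlinear estimate (ii) genuinely closes in the bounded domain $\Omega$ with these mixed boundary conditions, i.e.\ that the analytic-semigroup smoothing for $A_1$ and $A_2$ holds in $L^p$ for all $p\in(1,\infty)$ with constants uniform enough to run the bootstrap — in particular handling the pressure/Leray-projection terms and the lower-order dynamo terms $(B\cdot\nabla)u_\TC$ near the boundary — rather than any conceptual difficulty, since the linear growing mode is handed to us by \cite{NFVillringer}.
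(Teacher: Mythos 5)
Your overall plan (FPS bootstrap driven by the $A=A_1\oplus A_2$ splitting, with the magnetic growing mode of \cite{NFVillringer} supplying linear instability, and Duhamel plus fractional-power smoothing to control the quadratic nonlinearity) is the right architecture and matches the paper's strategy. However, there is one genuine gap and one minor imprecision.

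The gap is in your choice of initial datum. You fix $W_0 = \delta(0,B_\star)/\|B_\star\|_{W^{s,p}}$ with $B_\star$ the unstable eigenmode of $A_2$, \emph{regardless} of the spectral picture of $A_1$. But the FPS bootstrap requires the initial perturbation to be the eigenmode of the \emph{full} operator $A$ corresponding to the eigenvalue with maximal real part $\lambda$. Concretely, the escape-time argument bounds the nonlinear Duhamel term by
$\int_0^t \e^{(\lambda+\eta)(t-\tau)}(t-\tau)^{-\alpha}\|W(\tau)\|^2\dd\tau$,
and closing this against the ansatz $\|W(\tau)\|\lesssim \delta \e^{\mu\tau}$ (with $\mu$ the $A_2$ growth rate) requires the integrand factor $\e^{(\lambda+\eta-2\mu)(t-\tau)}$ to be integrable, i.e.\ $\lambda+\eta<2\mu$. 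If the linearized Navier--Stokes operator $A_1$ has an eigenvalue with real part $\Lambda\geq 2\mu$, then $\lambda=\Lambda$ and the nonlinear feedback (generated through $N(B,B)$, propagated by $\e^{tA_1}$) grows at rate $\e^{\Lambda t}$, swamping the $\e^{\mu t}$ main term well before the intended escape time $t_\star\sim|\log\delta|/\mu$. Since the theorem is asserted for \emph{all} $\nu>0$, and for small $\nu$ the Taylor--Couette flow is hydrodynamically unstable so $\Lambda$ can be large, the fixed choice $(0,B_\star)$ does not close the bootstrap in general. The paper avoids this by taking $\phi_0$ to be the eigenmode associated to the eigenvalue of $A$ of largest real part (which may live in the $v$-component), and then observing that only under the additional hypothesis $\nu\gg\|u_\TC\|_{W^{1,\infty}}$ — where an energy/Poincar\'e estimate rules out growing modes for $A_1$ — is this dominant mode guaranteed to be of the form $(0,B_0)$, which is exactly when the ``Furthermore'' clause applies.

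A smaller point: you assert ``$A_2$ is a bounded perturbation of $\eps\Delta$.'' The dynamo term $\nabla\times(u_\TC\times\cdot)$ is a \emph{first-order} differential operator, hence unbounded on $L^p$; it is only \emph{relatively} bounded with arbitrarily small relative bound (via interpolation between $L^p$ and $W^{2,p}$), which is what is actually needed and what the paper proves in Lemma \ref{lemma:analytic semigroup}. The resulting analyticity, domain characterization of fractional powers, and the key mapping estimate $\|A_\eta^{-\alpha}\nabla H\|_{L^q}\lesssim\|H\|_{L^p}$ then rely on the elliptic $W^{2,p}$-regularity and interpolation theory for the Hodge Laplacian with perfectly conducting boundary conditions (Lemma \ref{properties of Hodge Laplacian}), not just abstract analytic-semigroup smoothing. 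You correctly flag this as the main technical obstacle, but the route is through ADN elliptic estimates and Seeley's interpolation results rather than boundedness of the perturbation.
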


Theorem \ref{theorem} shows that the MHD equations \eqref{eq:mhd} with sufficiently small magnetic resistivity $0<\eps\ll 1$ and boundary conditions given by \eqref{eq:BC-u} and \eqref{eq:BC-B} are nonlinearly unstable in $L^p$, for any $1<p<\infty$, around the Taylor--Couette velocity field \eqref{eq:TC} and the trivial magnetic field $B=0$.

\begin{remark}
From the proof of Theorem \ref{theorem} in Section \ref{s:proof}, it will become clear that the actual behaviour of $(u(t_\star),B(t_\star))$ depends primarily on the spectral picture of the linearised operator $A=A_1\oplus A_2$. In fact, $(u(t_\star),B(t_\star))$ will look like the eigenmode of $A$ associated to the eigenvalue with largest real part, perturbed by the nonlinear terms from the mild formulation. In particular, if the operator $A_1$ is linearly stable, which from energy estimates can be seen to occur for instance when $\nu \gg \|u_{\TC}\|_{W^{1,\infty}}$, then $\|B(t_\star)\|_{L^p} \sim \chi $. In other words, provided $\nu$ is not too small, we see that the nonlinear instability develops due to a transfer of energy from the velocity field to the magnetic field, as conjectured in the physical literature, see e.g.\ \cites{BrandenburgSubramanian2005,Moffatt1978,RuedigerHollerbach2006,ChildressGilbert}.
\end{remark}

An interesting follow-up question would be to study the dependence of the coefficients $\chi$ and $t_\star$ from Theorem \ref{theorem} on the resistivity $\eps$, in the case where $\nu \gg \|u_{\TC}\|_{W^{1,\infty}}$. Our methods show that  \[
t_\star \sim \frac{1}{\lambda} \log \left(\frac{\chi}{\delta}\right),
\]
where $\lambda\gtrsim \eps^{1/3}$ denotes the eigenvalue corresponding to the fastest growing mode of the linear operator $A_2$. However, the value of $\chi$ depends on the constants appearing in Lemma \ref{lemma:abstract semigroup lemma}. Quantifying these in $\eps$ would necessitate precise resolvent estimates for the linearised operator $A_2$ in $L^p$, which do not appear to follow from our analysis in \cites{NFVillringer}.

\section{Linear instability and the kinematic dynamo}\label{s:linear}

The main cornerstone of our proof is the linear instability result that was recently obtained in \cite{NFVillringer}. We shall dedicate this section to a brief overview of the constructions and theorems from that paper in order to yield a better understanding of the result. This work is concerned with spectral instability of the so-called \emph{kinematic dynamo equations}, namely the following linear problem,
\begin{equation}
    \begin{split}\label{eq:dynamo equation}
        \partial_t B + (\bar u\cdot\nabla)B - (B\cdot\nabla)\bar u & = \eps \Delta B, \\
        \div(B) & = 0,
    \end{split} 
\end{equation}
with $(t,x) \in (0,\infty) \times \Omega$, endowed with the perfectly conducting boundary conditions $B\cdot\hat n|_{\partial\Omega} = 0$, $\hat n\times(\nabla\times B)|_{\partial\Omega} = 0$, and an initial datum $B(0,\cdot) = B_0$.
As before, $\hat n$ denotes the unit outwards pointing normal vector to $\partial\Omega$.
In this problem, the magnetic field $B$ is \emph{passively} transported by a given autonomous and divergence-free velocity field $\bar u$ of the form
\[
\bar u(r)=r\Omega(r)\hat\theta+U(r)\hat z,
\]
with $U,\Omega$ smooth radial functions. This type of vector fields were introduced by Ponomarenko in the context of the dynamo problem \cites{Ponomarenko73,gilbert1988}, which asks whether it is possible to find examples of velocity fields $\bar u$ such that the magnetic energy associated to the linear problem \eqref{eq:dynamo equation} grows exponentially in time,
\begin{equation}\label{eq:exp-growth}
\|B(t)\|_{L^2}\geq C\e^{\lambda t}\|B_0\|_{L^2},
\end{equation}
for some constants $C,\lambda>0$. 

The results in \cite{NFVillringer} can be applied to the case of $\Omega = [R_1,R_2] \times \mathbb{T}\times \mathbb{T}$ as defined in \eqref{eq:domain} with the Taylor--Couette velocity field $\bar u = u_\TC$ \eqref{eq:TC}. Therefore we see that the equations \eqref{eq:dynamo equation} can be written in terms of the kinematic dynamo operator $A_2$ introduced in the previous section: $\partial_t B = A_2B$. In this setting we find the following proposition.

\begin{proposition}[\cite{NFVillringer}*{Theorem 1.2, Lemma 3.5}]\label{prop:dynamo}
    For every $\eps>0$ sufficiently small, there exist a $C^2$ divergence-free vector field $B_0:\Omega\to\R^3$ and a constant $\lambda\sim\eps^{1/3}>0$ so that
    \[
    B(t,r,\theta,z) = \e^{\lambda t}B_0(r,\theta,z)
    \]
    defines a solution to the linear problem \eqref{eq:dynamo equation} with Taylor--Couette velocity field $\bar u = u_\TC$ \eqref{eq:TC} and perfectly conducting boundary conditions \eqref{eq:BC-B} in the annular cylinder $\Omega$.
\end{proposition}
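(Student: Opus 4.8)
The plan is to diagonalize $A_2$ by Fourier modes in $(\theta,z)$, reduce the eigenvalue equation $A_2B_0=\lambda B_0$ to a boundary value problem for a system of ODEs in $r$, and then produce an unstable eigenvalue by a singular-perturbation (turning-point) analysis as $\eps\to0$, with the exponent $1/3$ arising from a resonant layer of width $\eps^{1/3}$. Write $u_\TC(r)=r\varpi(r)\hat\theta+U(r)\hat z$ with $\varpi(r)=a_1/r^2+a_3$ and $U(r)=a_2\log r+a_4$. Since the coefficients of \eqref{eq:dynamo equation} do not depend on $\theta$ or $z$, the operator $A_2$ commutes with translations in those variables and hence block-diagonalizes over $(m,k)\in\Z^2$. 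Looking for $B_0(r,\theta,z)=e^{i(m\theta+kz)}\widehat B(r)$ with $k\neq0$ and writing $\omega(r):=m\varpi(r)+kU(r)$ for the advected phase, a computation in cylindrical components (using $\partial_\theta\hat r=\hat\theta$, $\partial_\theta\hat\theta=-\hat r$) turns \eqref{eq:dynamo equation} into a linear ODE system for $\widehat B=(\widehat B_r,\widehat B_\theta,\widehat B_z)$ on $[R_1,R_2]$: in the radial equation the advection and stretching contributions of $\varpi\widehat B_\theta$ cancel, leaving $\eps\big(\widehat B_r''+r^{-1}\widehat B_r'-(m^2r^{-2}+k^2+r^{-2})\widehat B_r-2imr^{-2}\widehat B_\theta\big)=(\lambda+i\omega)\widehat B_r$; the stretching then enters the azimuthal and axial equations only through the $O(1)$ terms $r\varpi'(r)\widehat B_r$ and $U'(r)\widehat B_r$ (the differential-rotation, or $\omega$-, effect), whose coefficients never vanish since $a_1,a_2\neq0$; and the divergence constraint $\tfrac1r(r\widehat B_r)'+\tfrac{im}{r}\widehat B_\theta+ik\widehat B_z=0$ eliminates $\widehat B_z$ (it also renders the axial equation redundant, as $A_2$ preserves divergence-free fields), leaving a fourth-order system for $(\widehat B_r,\widehat B_\theta)$. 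The perfectly conducting conditions \eqref{eq:BC-B} become four linear boundary conditions at $r=R_1,R_2$, so the eigenvalues of $A_2$ on this block are the zeros of an Evans-type Wronskian determinant $D(\lambda;\eps,m,k)$, entire in $\lambda$.

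The core of the argument is the asymptotic analysis of $D$ as $\eps\to0$. One selects $(m,k)$ (in general with $|m|,|k|$ large and $m/k$ of a fixed sign) so that, self-consistently with $\Im\lambda=-\omega(r_c)$, there is a resonant radius $r_c$ in the open annulus $(R_1,R_2)$; the equations then split into an outer WKB regime and an inner layer $r=r_c+\eps^{1/3}\xi$ in which the diffusive term $\eps\partial_r^2$ balances the advective detuning $i(\omega(r)-\omega(r_c))$, producing to leading order an Airy-type reduced system coupled through the $\omega$-effect. Matching the inner solution to the outer solutions selected by the boundary data gives the leading quantization condition, which is solvable with a root $\lambda=\eps^{1/3}\lambda_0+o(\eps^{1/3})$ for a complex constant $\lambda_0=\lambda_0(m,k,r_c)$; the positivity $\Re\lambda_0>0$ for an appropriate nonempty range of $(m,k)$ is precisely the Ponomarenko dynamo mechanism, in which differential rotation generates the toroidal field while resonant-layer diffusion breaks the phase cancellation. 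The explicitly solvable discontinuous Ponomarenko flow, for which $D$ is given in terms of modified Bessel functions, can guide the choice of this dynamo-active window, even though $u_\TC$ is not close to a discontinuous profile.

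To turn the formal picture into a proof I would construct a genuine approximate eigenfunction $B_0^{\mathrm{app}}$ by gluing the inner Airy profile to the outer WKB solutions with smooth cutoffs, verify the residual bound $\|(A_2-\eps^{1/3}\lambda_0)B_0^{\mathrm{app}}\|_{L^2}\le o(\eps^{1/3})\,\|B_0^{\mathrm{app}}\|_{L^2}$, and then invoke the analyticity of $\lambda\mapsto D(\lambda;\eps,m,k)$ together with a Rouché argument — equivalently, a quantitative bound on the resolvent $(A_2-\lambda)^{-1}$ on a small circle of radius $o(\eps^{1/3})$ about $\eps^{1/3}\lambda_0$ — to conclude that $A_2$ has a true eigenvalue $\lambda=\eps^{1/3}\lambda_0+o(\eps^{1/3})$, hence $\lambda\sim\eps^{1/3}>0$. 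Since $\eps>0$, the operator $A_2$ is elliptic with smooth coefficients on $[R_1,R_2]$ (there is no singularity at $r=0$ because $R_1>0$) and the perfectly conducting boundary conditions satisfy the complementing condition, so elliptic regularity up to the boundary upgrades the eigenfunction to $C^\infty(\overline\Omega)$, in particular $C^2$. Finally, setting $B(t,r,\theta,z):=e^{\lambda t}B_0(r,\theta,z)$ with this eigenpair solves \eqref{eq:dynamo equation} with the stated boundary conditions, because the equation is linear and autonomous.

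The main obstacle is the matched-asymptotic analysis of the resonant layer: identifying the correct distinguished limit in $(\eps,m,k)$ that keeps $r_c$ strictly inside $(R_1,R_2)$, deriving the reduced Airy-type inner system and solving its connection problem carefully enough to see $\Re\lambda_0>0$, and — most delicately — controlling the remainders of the two-scale expansion uniformly in $\eps$ so that the Rouché/resolvent step actually closes and yields a genuine growing eigenvalue rather than merely a quasimode. Implementing the perfectly conducting boundary conditions correctly in the ODE reduction (they couple the cylindrical components of $B$) and verifying their ellipticity is a further, more routine, technical point.
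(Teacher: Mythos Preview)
The paper does not actually prove this proposition in the text: it is imported wholesale from \cite{NFVillringer}, and the only indication of method is the one-sentence summary that the proof ``revolves around detailed resolvent estimates that can be carried out via the introduction of tailor-made Green's functions, used to invert $A_2$ near the unstable eigenvalue $\lambda$.'' Your sketch is consistent with that description and with the known smooth-Ponomarenko literature (Gilbert's analysis in particular): the Fourier reduction in $(\theta,z)$, the resonant layer of width $\eps^{1/3}$ at a radius where the Doppler-shifted frequency vanishes, the Airy-type inner problem driven by the $\omega$-effect, and the passage from quasimode to true eigenvalue via an Evans/Rouch\'e argument are exactly the right ingredients, and they produce the stated $\lambda\sim\eps^{1/3}$.

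The only difference is one of packaging rather than substance. Where you phrase the argument as ``matched asymptotics $+$ Rouch\'e on an Evans function,'' the cited work apparently builds the resolvent $(A_2-\lambda)^{-1}$ directly from explicit Green's functions for the reduced ODE system --- Green's functions that, near the critical layer, are themselves constructed out of Airy functions and that encode the same inner--outer matching you describe. The Green's-function route has the advantage that the resolvent bounds needed to close the Rouch\'e step are produced in the same stroke as the approximate eigenvalue, rather than having to be established separately after the formal matching; your route is perhaps more transparent about \emph{why} the exponent is $1/3$ and where $\Re\lambda_0>0$ comes from. One small caution: your parenthetical ``in general with $|m|,|k|$ large'' is not the regime used here --- for the $\eps^{1/3}$ scaling one fixes $(m,k)$ in a dynamo-active window and sends $\eps\to0$, so that the resonant radius $r_c$ stays strictly inside $(R_1,R_2)$; taking the wavenumbers large simultaneously would change the distinguished limit.
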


Proposition \ref{prop:dynamo} proceeds via establishing the existence of an unstable eigenmode $B_0\in L^2_\omega(\Omega)$ for the kinematic dynamo operator $A_2$. The proof of this result revolves around detailed resolvent estimates that can be carried out via the introduction of tailor-made Green's functions, used to invert $A_2$ near the unstable eigenvalue $\lambda$. 

The aim of this note will be to ``upgrade'' the linear instability result of Proposition \ref{prop:dynamo} to a nonlinear instability result for the full nonlinear 3D MHD equations.

%\section{Nonlinear instability: proof of Theorem \ref{theorem}}

\section{Setup and technical lemmas}

We consider a divergence-free perturbation $(v,B)$ around the steady state $(u_\TC, 0)$ with Dirichlet boundary conditions for the velocity field $v|_{\partial\Omega}=0$, and perfectly conducting boundary conditions for the magnetic field $B\cdot\hat n|_{\partial\Omega} = \hat n\times(\nabla\times B)|_{\partial\Omega} = 0$. Let $\Prob$ denote the Leray projector operator onto the space of divergence-free vector fields. Then, the perturbation satisfies the equation
\begin{equation}\label{eq:eq-perturbation}
\begin{split}
\partial_t \begin{pmatrix}
v\\
B
\end{pmatrix}
=A\begin{pmatrix}
v\\
B
\end{pmatrix}
+\begin{pmatrix}
-N(v,v)+N(B,B)\\
M(v,B)
\end{pmatrix},
\end{split}
\end{equation}
where $A$ denotes the following linear operator
\begin{equation}\label{eq:A}
A\begin{pmatrix}
v\\
B
\end{pmatrix}=\begin{pmatrix}
\mathbb{P}(-(u_\TC\cdot \nabla) v-(v \cdot \nabla) u_\TC+\nu \Delta v) \\
- (u_\TC\cdot\nabla)B + (B\cdot\nabla)u_\TC + \eps\Delta B
\end{pmatrix},
\end{equation}
The nonlinear part is given by $N(v,v)=\mathbb{P} (\nabla \cdot (v\otimes v))$ and $M(v,B)=\nabla \times (v \times B)$.  In particular, the operator $A$ admits a rather simple, diagonal structure, in the sense that it may be written as $A=A_1 \oplus A_2$, where $A_1$ is the linearised Navier--Stokes operator with homogeneous Dirichlet boundary conditions, as studied already in \cite{Friedlander_Pavlović_Shvydkoy_2006}, and $A_2$ is the kinematic dynamo operator with perfectly conducting boundary conditions around the Taylor--Couette flow, for which Proposition \ref{prop:dynamo} yields an unstable eigenmode. This diagonal structure simplifies certain computations substantially---in particular, for any holomorphic $f :\Co \to \Co$ for which the functional calculus for sectorial operators is well defined, $f(A)$ is nothing but the operator $f(A_1) \oplus f(A_2)$. That being said, the following result shall be the main technical ingredient of this section.
\begin{lemma}
\label{lemma:abstract semigroup lemma}
Let $\eta>0$, and let $\lambda$ be the largest real part of any eigenvalue of the linear operator $A$ defined in \eqref{eq:A}. Set $A_\eta=A-\lambda-\eta$. Then, 
\begin{itemize}
    \item for any $\alpha \in (0,1)$, $p>1$ there exists a constant $C>0$ so that 
    \begin{equation*}
    \|A_{\eta}^\alpha \e^{A_\eta^\alpha t}\|_{L^p \to L^p} \leq \frac{C}{t^\alpha};
    \end{equation*}
    \item for any $\alpha\in (1/2,1)$ and any $q  \leq 3p/(3-(2\alpha-1)p)$, there exists a constant $C>0$ so that 
    \begin{equation*}
    \|A^{-\alpha}_{\eta}  \nabla  H\|_{L^q} \leq C\|H\|_{L^{p}}.
    \end{equation*}
    Here we abuse notation and write $\nabla H=\begin{pmatrix}
    \nabla H_1\\
    \nabla H_2
    \end{pmatrix}$
    for some vector fields $H_1, H_2$.
\end{itemize}

\end{lemma}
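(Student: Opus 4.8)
The plan is to reduce both estimates to standard properties of the fractional powers and analytic semigroups generated by $A_1$ and $A_2$, exploiting the diagonal structure $A = A_1\oplus A_2$ so that it suffices to prove each bound componentwise. The key structural input is that both $A_1$ (the linearised Navier--Stokes operator with Dirichlet data) and $A_2$ (the kinematic dynamo operator with perfectly conducting data) are sectorial operators on $L^p_\omega(\Omega)$ for $1<p<\infty$, generating analytic semigroups; for $A_1$ this is classical Stokes theory, while for $A_2$ it follows because $A_2$ is a zeroth-order perturbation of $\eps\Delta$ with boundary conditions under which $-\Delta$ has bounded imaginary powers (one can check the perfectly conducting conditions make $-\Delta$ act componentwise like a Hodge-type Laplacian with $L^p$-resolvent bounds). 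By construction $A_\eta = A - \lambda - \eta$ shifts the spectrum strictly into the left half-plane: since $\lambda$ is the largest real part of any eigenvalue and the operators have compact resolvent (the domain embeds compactly into $L^p$), the spectrum of $A$ lies in $\{\Re z \le \lambda\}$, hence $\sigma(A_\eta)\subset\{\Re z \le -\eta\}$, and moreover $A_\eta^\alpha$ for $\alpha\in(0,1)$ is again sectorial with spectrum in a sector around $(-\infty,-\eta^\alpha\cos(\alpha\pi/2)]$ or similar, in particular with uniformly negative spectral bound.

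For the first estimate, once $A_\eta^\alpha$ is known to generate an analytic semigroup with spectral bound $<0$, the bound $\|A_\eta^\alpha \e^{A_\eta^\alpha t}\|_{L^p\to L^p}\le C t^{-\alpha}$ is the standard ``$B e^{-tB}$'' estimate: for a sectorial operator $B=A_\eta^\alpha$ one has $\|B e^{-tB}\|\le C/t$ for all $t>0$ from the analytic semigroup bound, and the negative spectral bound even gives exponential decay for large $t$; since $B = A_\eta^\alpha$ itself has the form (power of $A_\eta$), the factor $A_\eta^\alpha$ in front is exactly $B$, and we are just invoking $\|B e^{-tB}\|_{L^p\to L^p}\le C/t^\alpha$? — more precisely, writing $B=A_\eta$, the quantity is $\|B^\alpha e^{-t B^\alpha}\|$, which by the functional calculus for the sectorial operator $B^\alpha$ equals $\sup_t\|\,(B^\alpha) e^{-tB^\alpha}\|$ and hence is $\lesssim 1/t$ in the variable conjugate to $B^\alpha$; one recovers the $t^{-\alpha}$ scaling either directly from the subordination/scaling of $B^\alpha$ or, cleanly, from the contour-integral representation $B^\alpha e^{-tB^\alpha} = \frac{1}{2\pi i}\int_\Gamma z^\alpha e^{-tz^\alpha}(z-B)^{-1}\,dz$ together with $\|(z-B)^{-1}\|\lesssim 1/|z|$ on the shifted sector and the substitution $w = t^{1/\alpha} z$. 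I would present it via this contour integral to make the $t^{-\alpha}$ power transparent.

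For the second estimate, the point is a mixed-norm smoothing bound: $A_\eta^{-\alpha}$ gains $2\alpha$ derivatives, the divergence/gradient costs one, so $A_\eta^{-\alpha}\nabla$ gains $2\alpha - 1 > 0$ derivatives and therefore maps $L^p\to L^q$ by Sobolev embedding with $\frac1q \ge \frac1p - \frac{2\alpha-1}{3}$, i.e. exactly the stated range $q\le 3p/(3-(2\alpha-1)p)$. Concretely I would factor $A_\eta^{-\alpha}\nabla H = A_\eta^{-\alpha}(-\Delta)^{1/2}\cdot (-\Delta)^{-1/2}\nabla H$; the operator $(-\Delta)^{-1/2}\nabla$ (a Riesz-transform-type operator, here composed with the Leray projector in the first component) is bounded on $L^p$, and the domain characterisation $D(A_\eta^\alpha) = D((-\Delta)^\alpha)$ with equivalent graph norms (again using bounded imaginary powers / the Dore–Venni or Seeley-type identification of fractional-power domains for these boundary conditions) gives $\|A_\eta^{-\alpha}(-\Delta)^{1/2-\alpha+\alpha}\cdots\| $ — more simply, $\|A_\eta^{-\alpha} g\|_{W^{2\alpha,p}}\lesssim \|g\|_{L^p}$, then $W^{2\alpha,p}\hookrightarrow W^{2\alpha-1,q}$ absorbs the derivative. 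The main obstacle I anticipate is the identification of the fractional-power domains of $A_1$ and especially $A_2$ with those of the Dirichlet/perfectly-conducting Laplacian in $L^p$, i.e. the $\mathcal H^\infty$-calculus or bounded-imaginary-powers property: for $A_1$ this is in the literature, but for $A_2$ with the perfectly conducting boundary conditions one must either cite a suitable reference or argue via perturbation (the dynamo term is lower order, so $A_2$ and $\eps\Delta$ have the same fractional-power domains), and the boundary conditions $B\cdot\hat n = 0$, $\hat n\times(\nabla\times B)=0$ do not decouple the Cartesian components, so some care is needed to reduce to scalar Laplacians; I would handle this by working with the associated Hodge Laplacian and invoking $L^p$-resolvent estimates for it, treating $\eps>0$ fixed so constants may depend on $\eps$.
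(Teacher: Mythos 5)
Your high-level plan matches the paper's: use the diagonal structure $A=A_1\oplus A_2$, invoke Friedlander--Pavlovi\'c--Shvydkoy / classical Stokes theory for $A_1$, and for $A_2$ argue that the dynamo term is a relatively small perturbation of $\eps\Delta$ with the perfectly conducting boundary conditions so that $A_2$ is sectorial and has fractional-power domains comparable to those of $\Delta$ (this is exactly what the paper's Lemmas \ref{lemma:analytic semigroup}, \ref{lemma:hairer} and \ref{properties of Hodge Laplacian} deliver). For the first item both proofs are ultimately ``standard analytic-semigroup theory.'' A small slip: you describe $\nabla\times(u_\TC\times\cdot)$ as a \emph{zeroth-order} perturbation; it is first-order, though you later correctly call it ``lower order,'' which is all you need.

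The genuine gap is in the second estimate, and you half-acknowledge it. Your plan to apply $\|A_\eta^{-\alpha}g\|_{W^{2\alpha,p}}\lesssim\|g\|_{L^p}$ to $g=\nabla H$ pays a derivative on $H$, which is exactly what the lemma forbids, so you fall back on the factorisation $A_\eta^{-\alpha}\nabla H = A_\eta^{-\alpha}(-\Delta)^{1/2}\,(-\Delta)^{-1/2}\nabla H$ and the $L^p$-boundedness of the ``Riesz-transform-type'' operator $(-\Delta)^{-1/2}\nabla$. But for the Hodge Laplacian on a bounded domain with the perfectly conducting conditions (which couple the Cartesian components), that boundedness is not available off the shelf, and you flag this without resolving it. The paper sidesteps the Riesz transform entirely: the second claim of Lemma \ref{lemma:analytic semigroup} proves directly, via a Neumann-series expansion of $(A_2-\eps\mu)^{-1}$ around $(\eps\Delta-\eps\mu)^{-1}$ together with the Agmon--Douglis--Nirenberg elliptic estimate \eqref{eq:hodge laplacian sobolev}, that $\|(A_2-\mu)^{-1}\nabla H\|_{W^{1,p}}\lesssim\|H\|_{L^p}$, i.e.\ the resolvent absorbs the gradient at the $W^{1,p}$ level; the second item of Lemma \ref{lemma:abstract semigroup lemma} is then obtained by interpolating between this and the Sobolev embedding, writing $(A_2-\mu)^{-\alpha}\nabla = (A_2-\mu)^{1-\alpha}\,(A_2-\mu)^{-1}\nabla$. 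You would need to either supply the Riesz-transform bound for this boundary-value problem or reproduce something equivalent to the paper's resolvent estimate; as written, that step is missing. (Separately, the displayed inequality in the first bullet appears to carry a typo — it should read $\|A_\eta^\alpha \e^{A_\eta t}\|\le C t^{-\alpha}$, which is how it is used in Section \ref{s:proof}; your attempted reconciliation of $t^{-\alpha}$ for the literal expression $B^\alpha e^{-tB^\alpha}$ cannot work, since that quantity scales like $t^{-1}$.)
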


The usefulness of this technical lemma will become apparent in the proof of Theorem \ref{theorem} in Section \ref{s:proof}. In particular, within the mild formulation, when the semigroup $\e^{A(t-\tau)}$ for $0\leq \tau\leq t$ acts on the nonlinear terms, we employ the identity
\[
\e^{A(t-\tau)} = \e^{(\lambda+\eta)(t-\tau)}A_\eta^\alpha \e^{A_\eta(t-\tau)}A_\eta^{-\alpha}
\]
to control the corresponding $L^p$ norms.

Via the diagonal decomposition $A=A_1 \oplus A_2$, it suffices to prove the result separately for $A_1$, $A_2$. Furthermore, the result for $A_1$ has already been shown in \cite{Friedlander_Pavlović_Shvydkoy_2006}*{Lemmas 3.1, 3.2}, so it remains to cover the $B$ component, i.e.\ to prove the corresponding result for the operator $A_2$. 

Analysing this operator will turn out to be somewhat non-trivial, since it is inherently a perturbation of the \emph{Hodge Laplacian}, rather than the Dirichlet Laplacian. In particular, this operator is no longer self-adjoint on $L^2$, unless it is considered on the space of divergence free vector fields, where it acts as $\Delta=-\nabla \times \nabla \times$. As such, we take $A_2$ to have domain 
\begin{equation*}
\mathcal{D}(A_2)=\{B \in W^{2,p}_\omega(\Omega):B \cdot \hat n|_{\partial \Omega}=\hat n \times (\nabla \times B)|_{\partial \Omega}=0\}.
\end{equation*}
In order to prove Lemma \ref{lemma:abstract semigroup lemma}, the goal will then be to show that $A_2$ behaves ``as though it was a Dirichlet Laplacian''. In particular, its fractional powers should generate suitable fractional Sobolev spaces.

It is worth mentioning that from a differential geometry perspective, the Hodge Laplacian is defined in terms of the exterior derivative $d$ and its formal adjoint $\Delta$ as $\Delta B = (d\delta + \delta d)B$, where now $B$ denotes a $k-$form in the Riemannian manifold $\Omega$. In this formulation, one finds the following natural set of boundary conditions, so-called \emph{absolute} or \emph{Neumann-type},
\[
\iota_{\hat n} B|_{\partial\Omega} = 0, \quad \iota_{\hat n}dB |_{\partial\Omega} = 0,
\]
where the symbol $\iota_{\hat n}$ denotes the inner product with the unit vector $\hat n$ that is normal to $\partial\Omega$. In particular, if $B$ is a $1-$form like a three dimensional vector field, these boundary conditions coincide with the perfectly conducting setting introduced in \eqref{eq:BC-B}. For further information about the Hodge Laplacian in Riemannian manifolds we refer to \cite{GueriniSavo03}.

To start, let us recall classical results on the $L^p$ theory of systems of elliptic equations. In particular, we shall briefly mention the classical work of \cite{Agmon_Douglis_Nirenberg_1964}, which is concerned with elliptic systems of differential equations satisfying so called \emph{complementing boundary conditions}. The verification of these conditions for the Laplacian is a rather simple, although somewhat arduous endeavour. As a consequence, we shall state here a list of properties of the Hodge Laplacian with perfectly conducting boundary conditions. 
\begin{lemma}[Properties of the Hodge Laplacian (\cites{Agmon_Douglis_Nirenberg_1964, Seeley_1971, Seeley_1972})]
\label{properties of Hodge Laplacian}
Let $\Omega \subset \mathbb{R}^3$ be a smooth domain, and consider the operator $\Delta$ on $L^p(\Omega)$, with domain 
\[
\mathcal{D}(\Delta)=\{B \in W^{2,p}(\Omega):B \cdot \hat n|_{\partial \Omega}=\hat n \times (\nabla \times B)|_{\partial \Omega}=0\}.
\]
Then, the following statements hold true.
\begin{itemize}
    \item There exists a constant $K_1>0$ so that for any $F \in L^p(\Omega)$ such that $\Delta B=F$, $B \in \mathcal{D}(\Delta)$ there holds 
    \begin{equation}
    \label{eq:hodge laplacian sobolev}
    \|B\|_{W^{2,p}(\Omega)} \leq K_1\left(\|F\|_{L^p(\Omega)}+\|B\|_{L^p(\Omega)}\right).
    \end{equation}
    \item For all $c >0$ large enough, and for any $\theta \in [0,1]$, the interpolation space $\mathcal{D}((-c+\Delta)^\theta)$ is well-defined, equal to a subspace of $W^{2\theta,p}(\Omega)$, and there exists a constant $K_2>0$ so that for all $B \in \mathcal{D}((-c+\Delta)^\theta)$ it holds
    \begin{equation}
    \label{eq:interpolation inequality}
    \|B\|_{W^{2\theta,p}(\Omega)} \leq K_2 \|(-c+\Delta)^\theta B\|_{L^p(\Omega)}.
    \end{equation}
\end{itemize}
\end{lemma}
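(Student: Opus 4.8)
The plan is to invoke the theory of Agmon–Douglis–Nirenberg \cite{Agmon_Douglis_Nirenberg_1964} for elliptic systems with complementing boundary conditions, together with the semigroup-theoretic interpolation results of Seeley \cites{Seeley_1971,Seeley_1972}. First, I would set up the Hodge Laplacian $\Delta = -\nabla\times\nabla\times + \nabla\,\mathrm{div}$ acting componentwise, and observe that on a three-dimensional vector field $B = (B_1,B_2,B_3)$ it is simply the diagonal system $(\Delta B)_i = \Delta B_i$ (scalar Laplacian), so the interior symbol is $|\xi|^2\,\mathrm{Id}$, which is properly elliptic in the ADN sense (the characteristic polynomial $\det(|\xi|^2\mathrm{Id}) = |\xi|^6$ has, for each pair of linearly independent real vectors, exactly three roots with positive imaginary part). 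The main content is therefore to verify that the perfectly conducting boundary conditions $B\cdot\hat n = 0$ and $\hat n\times(\nabla\times B) = 0$ — which together constitute three scalar boundary conditions, the correct number for a sixth-order system — satisfy the Lopatinskii–Shapiro complementing condition at every boundary point. I would do this by flattening the boundary locally, freezing coefficients, and checking that the only bounded solution on the half-line of the ODE system obtained by restricting the principal symbols to the boundary cotangent directions, subject to the (frozen) boundary operators, is the trivial one; this is the "arduous but routine" computation alluded to in the text, and it is where essentially all the work sits. Since the system is diagonal and the boundary conditions decouple into a Neumann-type condition on the normal component and Dirichlet-type conditions on the tangential components after an orthogonal change of frame adapted to $\hat n$, the Lopatinskii determinant is a nonzero constant, so the condition holds.

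Granting the complementing condition, the a priori estimate \eqref{eq:hodge laplacian sobolev} is the standard ADN estimate for solutions of $\Delta B = F$ with $B \in \mathcal{D}(\Delta)$: one has $\|B\|_{W^{2,p}(\Omega)} \le K_1(\|F\|_{L^p(\Omega)} + \|B\|_{L^p(\Omega)})$ on the bounded smooth domain $\Omega$, obtained by patching interior estimates and the boundary estimates from the half-space model via a partition of unity. For the second bullet, I would first note that $-c + \Delta$, for $c > 0$ large enough, is invertible with bounded inverse on $L^p(\Omega)$ (shifting spectrum off the positive axis) and generates a bounded analytic semigroup — this follows from resolvent estimates for the ADN system, which are again in \cite{Agmon_Douglis_Nirenberg_1964} or can be deduced from \eqref{eq:hodge laplacian sobolev} plus a scaling/compactness argument. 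Once $-c+\Delta$ is a sectorial, densely defined, injective operator with bounded imaginary powers (Seeley's theorem \cites{Seeley_1971,Seeley_1972} guarantees the purely imaginary powers are bounded, hence the fractional power spaces form an interpolation scale coinciding with complex interpolation), the interpolation identity $[\,L^p(\Omega),\,\mathcal{D}(-c+\Delta)\,]_\theta = \mathcal{D}((-c+\Delta)^\theta)$ holds. Combining this with $\mathcal{D}(-c+\Delta) = \mathcal{D}(\Delta) \hookrightarrow W^{2,p}(\Omega)$ (from \eqref{eq:hodge laplacian sobolev}) and the fact that complex interpolation of $L^p$ and $W^{2,p}$ gives $W^{2\theta,p}$, one gets $\mathcal{D}((-c+\Delta)^\theta) \hookrightarrow W^{2\theta,p}(\Omega)$ with a constant $K_2$, which is exactly \eqref{eq:interpolation inequality}.

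The main obstacle is genuinely the verification of the complementing boundary condition for the pair $\{B\cdot\hat n,\ \hat n\times(\nabla\times B)\}$: one must be careful that these three conditions are algebraically independent on the half-space symbol level and that the resulting boundary-value problem for the decaying half-line ODE solutions is uniquely solvable. A secondary technical point is that Seeley's bounded-imaginary-powers result is usually stated for operators satisfying resolvent bounds on a sector together with the right mapping properties on the scale of boundary-condition-compatible Sobolev spaces; I would cite \cites{Seeley_1971,Seeley_1972} directly for the Hodge Laplacian with these boundary conditions and not reprove it, since reproducing that machinery is outside the scope of this note. Everything else — the ADN a priori estimate, the shift by $c$, the sectoriality, and the interpolation identification — is standard once the complementing condition is in hand.
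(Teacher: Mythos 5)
Your proposal is correct and follows essentially the same route as the paper: invoke Agmon--Douglis--Nirenberg \cite{Agmon_Douglis_Nirenberg_1964} (specifically Theorem 10.5) for the a priori $W^{2,p}$ estimate, then Seeley \cites{Seeley_1971,Seeley_1972} for the identification $\mathcal{D}((-c+\Delta)^\theta)=[L^p,\mathcal{D}(\Delta)]_\theta$ via bounded imaginary powers and the embedding of this interpolation space into $W^{2\theta,p}$. The paper's proof is terser; it does not spell out the Lopatinskii--Shapiro verification, treating the complementing condition as standard (as you also anticipate), and for the second bullet it reads off the two embeddings directly from \cite{Seeley_1971}*{Theorem 3} and \cite{Seeley_1972} rather than re-deriving them through the exactness of the complex interpolation functor applied to $\mathcal{D}(\Delta)\hookrightarrow W^{2,p}$; both routes are legitimate and lead to the same inequality. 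One small slip in your sketch: in a flattened half-space with normal $\hat n$, the pair $B\cdot\hat n|_{\partial\Omega}=0$, $\hat n\times(\nabla\times B)|_{\partial\Omega}=0$ decouples into a \emph{Dirichlet} condition on the normal component and \emph{Neumann} conditions on the tangential components (not the other way around, as you wrote); this does not affect the conclusion that the Lopatinskii determinant is nonzero, but the attribution should be corrected.
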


\begin{proof}
The estimate \eqref{eq:hodge laplacian sobolev} follows immediately from \cite{Agmon_Douglis_Nirenberg_1964}*{Theorem 10.5}. To prove \eqref{eq:interpolation inequality}, we first note that by \cite{Seeley_1971}, we know that $\mathcal{D}((-c+\Delta)^\theta)=[L^p(\Omega),\mathcal{D}(\Delta)]_\theta$, and from a reading of the proof of \cite{Seeley_1971}*{Theorem 3}, there exists a constant $C>0$ so that $\|B\|_{\theta} \leq C\|(-c+\Delta)^\theta B\|_{L^p(\Omega)}$, where $\|\cdot \|_{\theta}$ denotes the interpolation norm of $[L^p(\Omega),\mathcal{D}(\Delta)]_\theta$, see e.g.\ \cite{Calderón_1964}. Furthermore, from \cite{Seeley_1972}, we further deduce the existence of a constant $C_2>0$ so that for all $B \in [L^p(\Omega),\mathcal{D}(\Delta)]_\theta$, there holds $\|B\|_{W^{2\theta,p}(\Omega)} \leq C_2\|B\|_{\theta}$. Hence, \eqref{eq:interpolation inequality} follows.
\end{proof}

We remark that in \cites{Agmon_Douglis_Nirenberg_1964,Seeley_1971,Seeley_1972} very general sets of boundary conditions are considered---so-called \emph{complementary} \cite{Agmon_Douglis_Nirenberg_1964}*{Section 2} or \emph{normal} \cite{Seeley_1972}*{Definition 3.1} boundary conditions---which in particular include the perfectly conducting case \eqref{eq:BC-B}. These are given locally via an algebraic relation, and ensure that the boundary value problem for the elliptic operator is well-posed. In addition, we want to highlight that the perfectly conducting boundary conditions for divergence-free vector fields lead to a dissipative realization of the (vector) Laplacian, since we have the identity
\[
\int_\Omega \Delta B\cdot B\dd x = -\int_\Omega |\nabla\times B|^2\dd x.
\]
In particular, from this expression it is clear that the Laplacian is a negative operator, and as noted already in \cite{SermangeTemam1983}, $\|\nabla \times B\| \sim \|B\|_{\dot H^1}$ for divergence-free vector fields. It is clear that the kinematic dynamo operator admits a compact resolvent on the space of divergence-free vector fields satisfying the perfectly conducting boundary conditions. Since the same property holds true for $A_1$, it follows that all eigenvalues of $A$ are isolated and discrete.
With these preliminaries out of the way, we now turn to the proof of Lemma \ref{lemma:abstract semigroup lemma}.

\subsection{Proof of Lemma \ref{lemma:abstract semigroup lemma}}

The proof will be split into multiple shorter results, the first and most essential of which is the following.

\begin{lemma}
\label{lemma:analytic semigroup}
Let $1<p<\infty$, and consider $A_2B=\nabla \times (u_\TC \times B)+\eps \Delta B$ on $L^p_\omega(\Omega)$, with domain 
$$
\mathcal{D}(A_2)=\{B \in W^{2,p}_\omega(\Omega):B \cdot \hat n|_{\partial \Omega}=\hat n \times (\nabla \times B)|_{\partial \Omega}=0\}.
$$
Then, $A_2$ generates an analytic semigroup on $L^p_{\omega}(\Omega)$.
Furthermore, for any $\mu \in \rho(A_2)$, where $\rho(A_2)$ denotes the resolvent set, there exists a constant $C>0$ depending only on $\mu$, so that for all $B \in W^{1,p}(\Omega)$ there holds 
\begin{equation}
\|(A_2-\mu)^{-1}\nabla B\|_{W^{1,p}(\Omega)} \leq C\|B\|_{L^p(\Omega)}.
\end{equation}

\end{lemma}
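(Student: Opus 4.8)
The plan is to establish the two assertions of Lemma \ref{lemma:analytic semigroup} by exploiting that $A_2$ is a lower-order perturbation of the Hodge Laplacian $\eps\Delta$, whose properties are collected in Lemma \ref{properties of Hodge Laplacian}. First I would observe that $A_2 = \eps\Delta + L$, where $L B := \nabla\times(u_\TC\times B) = (B\cdot\nabla)u_\TC - (u_\TC\cdot\nabla)B$ is a first-order differential operator with smooth (bounded) coefficients, since $u_\TC$ is smooth on the closed annulus. By the Sobolev a priori estimate \eqref{eq:hodge laplacian sobolev} together with interpolation (Ehrling's lemma), for every $\varrho>0$ there is $C_\varrho$ with $\|L B\|_{L^p}\lesssim \|B\|_{W^{1,p}}\leq \varrho\|B\|_{W^{2,p}}+C_\varrho\|B\|_{L^p}\leq \varrho K_1(\|\eps\Delta B\|_{L^p}+\|B\|_{L^p})+C_\varrho\|B\|_{L^p}$. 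Hence $L$ is relatively bounded with respect to $\eps\Delta$ with relative bound $0$. Since $\eps\Delta$ with the stated domain generates an analytic semigroup on $L^p_\omega(\Omega)$ — this is a standard consequence of the resolvent estimates of Agmon--Douglis--Nirenberg type for the Hodge Laplacian with complementing boundary conditions, giving sectoriality — the classical perturbation theorem for analytic semigroups (e.g. Pazy, Theorem 3.2.1, or Engel--Nagel III.2.10) yields that $A_2 = \eps\Delta + L$ also generates an analytic semigroup on $L^p_\omega(\Omega)$. I would also note here that $\eps\Delta$ restricted to divergence-free fields is negative, so after shifting by a large constant one is genuinely in the sectorial setting required by Lemma \ref{properties of Hodge Laplacian}.

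For the second assertion, the key point is the gain of one derivative when inverting $A_2$ on a gradient. The strategy is: given $B\in W^{1,p}(\Omega)$, write $w := (A_2-\mu)^{-1}\nabla B$, so $w\in\mathcal D(A_2)\subset W^{2,p}$ and $(\eps\Delta - \mu)w = \nabla B - L w$. The right-hand side $\nabla B - Lw$ is not obviously better than $L^p$, but the term $\nabla B$ is a pure gradient and the first-order term $Lw$ can be absorbed. I would apply the Hodge-Laplacian elliptic estimate in a form that measures the solution one derivative below the data: more precisely, using \eqref{eq:hodge laplacian sobolev} for $(\eps\Delta-\mu)$ (legitimate after absorbing $\mu$ into the lower-order perturbation and choosing the shift large, so that $\mu\in\rho(A_2)$ and the shifted operator is invertible) one has the scale of estimates $\|w\|_{W^{k+1,p}}\lesssim \|(\eps\Delta-\mu)w\|_{W^{k-1,p}}$ for $k=0$, interpreted via duality / negative Sobolev spaces; equivalently, $(\eps\Delta-\mu)^{-1}\nabla$ maps $L^p\to W^{1,p}$ boundedly. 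This last fact is the natural ``$\nabla\Delta^{-1}$ gains one derivative'' statement and follows from the pseudodifferential / Calderón--Zygmund theory underlying \cite{Agmon_Douglis_Nirenberg_1964}, or alternatively from writing $\nabla B = \eps\Delta \Psi$ for a suitable $\Psi\in W^{2,p}$ with $\|\Psi\|_{W^{2,p}}\lesssim\|B\|_{W^{1,p}}$ is \emph{not} quite right dimensionally, so instead I would argue: $(\eps\Delta-\mu)^{-1}\nabla = (\eps\Delta-\mu)^{-1}\nabla(-\mu+\eps\Delta)(-\mu+\eps\Delta)^{-1}$ is not helpful either — the clean route is the bound $\|(\eps\Delta-\mu)^{-1}\nabla B\|_{W^{1,p}}\lesssim\|B\|_{L^p}$ obtained directly from the mapping properties of the resolvent of an elliptic system acting on a distribution of order $-1$.

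Concretely, to avoid invoking the general ADN machinery for data in negative spaces, I would instead bootstrap: set $w=(A_2-\mu)^{-1}\nabla B$ and estimate $\|w\|_{W^{1,p}}$ by interpolating $\|w\|_{W^{1,p}}\lesssim \|w\|_{W^{2,p}}^{1/2}\|w\|_{L^p}^{1/2}$, then use $\|w\|_{L^p}=\|(A_2-\mu)^{-1}\nabla B\|_{L^p}$. Here $(A_2-\mu)^{-1}\nabla$ is bounded on $L^p$: indeed $(A_2-\mu)^{-1}$ is bounded $L^p\to W^{2,p}$, and $\nabla:W^{1,p}\to L^p$, but that costs a derivative on $B$ and a full gain on $w$, giving only $\|w\|_{W^{2,p}}\lesssim\|B\|_{W^{1,p}}$ — which after interpolation with a crude $\|w\|_{L^p}\lesssim\|B\|_{W^{1,p}}$ still only yields $\|w\|_{W^{1,p}}\lesssim\|B\|_{W^{1,p}}$, one derivative short of the claim. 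So the genuine content must be the half-derivative-gain estimate $\|(A_2-\mu)^{-1}\nabla B\|_{L^p}\lesssim\|B\|_{L^p}$, proved by duality: for $\phi\in L^{p'}_\omega$, $\langle (A_2-\mu)^{-1}\nabla B,\phi\rangle = \langle \nabla B,(A_2^*-\bar\mu)^{-1}\phi\rangle = -\langle B,\div\,(A_2^*-\bar\mu)^{-1}\phi\rangle$, and since $(A_2^*-\bar\mu)^{-1}\phi\in W^{2,p'}$ with norm controlled by $\|\phi\|_{L^{p'}}$, its divergence lies in $W^{1,p'}\hookrightarrow L^{p'}$, giving $|\langle\cdots\rangle|\lesssim\|B\|_{L^p}\|\phi\|_{L^{p'}}$ — hence $\|(A_2-\mu)^{-1}\nabla B\|_{L^p}\lesssim\|B\|_{L^p}$ (in fact one even gets $\div$ maps into $W^{1,p'}$, so a $W^{-1,p}$–type refinement is available). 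Then interpolating this with $\|w\|_{W^{2,p}}\lesssim\|B\|_{W^{1,p}}$ gives exactly $\|w\|_{W^{1,p}}\lesssim\|B\|_{L^p}^{1/2}\|B\|_{W^{1,p}}^{1/2}$, which is not quite the stated $\lesssim\|B\|_{L^p}$; to get the claimed bound one should instead interpolate $\|w\|_{W^{1,p}}\lesssim\|w\|_{W^{2,p}}^{1/2}\|w\|_{L^p}^{1/2}$ with the duality bound on $\|w\|_{L^p}$ and a $W^{2,p}$-bound that only costs $\|\nabla B\|_{W^{-1,p}}\lesssim\|B\|_{L^p}$, which again is the ADN estimate for data of order $-1$. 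The main obstacle is therefore precisely this: justifying the elliptic regularity estimate for the Hodge Laplacian with perfectly conducting boundary conditions when the data is a gradient of an $L^p$ function (i.e. a distribution in $W^{-1,p}$), $\|\Delta^{-1}\nabla B\|_{W^{1,p}}\lesssim\|B\|_{L^p}$. I expect to handle it either by the duality argument above (reducing everything to the already-cited $W^{2,p'}$ estimate \eqref{eq:hodge laplacian sobolev} for the adjoint, which has the same structure) or, if one prefers to stay within \cite{Agmon_Douglis_Nirenberg_1964}, by localizing, flattening the boundary, and invoking the half-space result for the Hodge system with the associated complementing boundary conditions applied to data in $W^{-1,p}$.
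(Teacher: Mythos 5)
Your treatment of the first claim (generation of an analytic semigroup) matches the paper's: you show $\nabla\times(u_\TC\times\cdot)$ is $\eps\Delta$-bounded with relative bound zero by interpolation (Ehrling / Gagliardo--Nirenberg) together with the elliptic estimate \eqref{eq:hodge laplacian sobolev}, and then invoke the classical perturbation theorem; the paper additionally cites Miyakawa for the fact that the Hodge Laplacian with perfectly conducting boundary conditions generates an analytic semigroup on $L^p_\omega$, and Lemma~\ref{lemma:hairer} in place of Pazy/Engel--Nagel, but the substance is identical.

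For the second claim your plan does not close, and you say so yourself: the duality step only yields $\|(A_2-\mu)^{-1}\nabla B\|_{L^p}\lesssim\|B\|_{L^p}$ (the gradient piece of the $W^{1,p}$ norm would still need a test function one derivative better than $L^{p'}$), and interpolating that against $\|w\|_{W^{2,p}}\lesssim\|B\|_{W^{1,p}}$ leaves you half a derivative short, as you compute. What you are ultimately missing is a $W^{-1,p}\to W^{1,p}$ resolvent bound, and neither of the two fallbacks you sketch is carried out. The paper circumvents this interpolation dead end by a Neumann series reduction: having established the relative-bound estimate \eqref{eq:relative perturbation}, it chooses $\eta$ small and $\mu$ large so that $\nabla\times(u_\TC\times(\eps\Delta-\eps\mu)^{-1}\cdot)$ has operator norm at most $1/2$ on $L^p_\omega$, whence
\[
(A_2-\eps\mu)^{-1}=(\eps\Delta-\eps\mu)^{-1}\sum_{n\ge 0}(-1)^n\bigl(\nabla\times(u_\TC\times(\eps\Delta-\eps\mu)^{-1})\bigr)^n.
\]
Applying this to $\nabla B$, the leading term is $(\eps\Delta-\eps\mu)^{-1}\nabla B$, and the tail is $(\eps\Delta-\eps\mu)^{-1}$ applied to a convergent geometric series starting from $z:=\nabla\times(u_\TC\times(\eps\Delta-\eps\mu)^{-1}\nabla B)$, which satisfies $\|z\|_{L^p}\lesssim\|B\|_{L^p}$. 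The entire claim for $A_2$ is thereby reduced to the \emph{single} gain-of-one-derivative estimate $\|(\eps\Delta-\eps\mu)^{-1}\nabla B\|_{W^{1,p}}\lesssim\|B\|_{L^p}$ for the pure Hodge Laplacian, applied once for the leading term and once to bound $z$; the series contributes only a geometric factor in $L^p$ followed by the smoothing $\|(\eps\Delta-\eps\mu)^{-1}\|_{L^p\to W^{1,p}}$. The bound is first obtained for this one favourable $\mu$ and then propagated to arbitrary $s\in\rho(A_2)$ by the resolvent identity. In short, the Neumann series lets you prove the resolvent estimate for $A_2$ without ever interpolating the resolvent of $A_2$ itself between positive- and negative-order spaces; the $W^{-1,p}$ input that you correctly identify as the crux is only needed for the unperturbed $\Delta$, where it is a classical elliptic fact. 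I would recommend restructuring your proof of the second claim around this reduction.
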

\begin{proof}
We begin by noting that by \cite{Miyakawa_1980}*{Theorem 3.10}, the operator $\Delta$ on $L^p_{\omega}$, with domain $\mathcal{D}(A_2)$ generates an analytic semigroup. Hence, it remains to show that the addition of the terms $\nabla \times (u_\TC \times \cdot)$ does not disturb this property. To do so, note that certainly for any $B \in \mathcal{D}(A_2)$ there holds that 
$$
\|\nabla \times (u_\TC \times B)\|_{L^p(\Omega)}\leq C(\|u_\TC\|_{W^{1,\infty}})\|B\|_{W^{1,p}}.$$
But now, by the Gagliardo--Nirenberg inequality, we can write 
$$
\|\nabla B\|_{W^{1,p}(\Omega)}\leq C\|B\|_{W^{2,p}(\Omega)}^{1/2}\|B\|_{L^p(\Omega)}^{1/2}.$$
Hence, for any $\eta>0$, there exists $C(\eta)>0$ so that for all $B \in W^{2.p}(\Omega)$ we can bound
\begin{equation}
\label{eq:dynamo bound Laplacian}
\|\nabla \times (u_\TC \times B)\|_{W^{1,p}(\Omega)}\leq \eta \|B\|_{W^{2,p}(\Omega)}+C(\eta)\|B\|_{L^p(\Omega)}.
\end{equation}
Next, from \eqref{eq:hodge laplacian sobolev} we deduce the existence of a \emph{uniform} constant $K>0$ so that if $\Delta B=F,
$ complemented with perfectly conducting boundary conditions, there holds 
\begin{equation*}
\|B\|_{W^{2,p}(\Omega)}\leq K\left (\|F\|_{L^p}+\|B\|_{L^p}\right ).
\end{equation*}
In other words, we have that for any $B \in \mathcal{D}(A_2)$
\begin{equation*}
\|B\|_{W^{2,p}(\Omega)}\leq K\left (\|\Delta B\|_{L^p}+\|B\|_{L^p}\right ).
\end{equation*}
Hence, combining this with \eqref{eq:dynamo bound Laplacian}, for any $\eta>0$ there exists $C(\eta)>0$ so that for all $B \in \mathcal{D}(A_2)$, 
\begin{equation}
\label{eq:relative perturbation}
\|\nabla \times (u_\TC \times B)\|_{L^p(\Omega)}\leq \eta \|\Delta B\|_{L^p(\Omega)}+C(\eta)\|B\|_{L^p(\Omega)}.
\end{equation}
Consequently, by classical functional analysis results, see for instance \cite{Kato}, the operator 
\[
A_2=\eps \Delta +\nabla \times (u_\TC \times \cdot)
\]
is closed when given the domain $\mathcal{D}(A_2)$, and generates an analytic semigroup (see also Lemma \ref{lemma:hairer}), completing the proof of the first claim.

To show the second claim of the lemma, we once again use the inequality \eqref{eq:relative perturbation} to note that for any $\eta>0$, there exists $C(\eta)>0$ so that for all $B \in \mathcal{D}(A_2)$ there holds 
\begin{equation*}
\|\nabla  \times (u \times B)\|_{L^p(\Omega)} \leq \eta \|\Delta B\|_{L^p(\Omega)}+C(\eta)\|B\|_{L^p(\Omega)}.
\end{equation*}
In particular, taking $\Delta \mapsto \eps \Delta$, for $\mu>0$ very large we find that
\begin{align*}
\|\nabla \times (u\times (\eps \Delta-\eps\mu)^{-1}B)\|_{L^p} & \leq \eps \eta \|\Delta (\eps \Delta-\eps \mu)^{-1}B\|_{L^p}+C(\eps \eta)\|(\eps \Delta-\eps \mu)^{-1}B\|_{L^p}\\
&\leq \eta \|B\|_{L^p}+\eta \mu\|(\Delta-\mu)^{-1}B\|_{L^p} +\eps^{-1}C(\eps \eta)\|(\Delta-\mu)^{-1}B\|_{L^p}\\
&\leq \eta (1+C)\|B\|_{L^p(\Omega)}+\eps^{-1}C(\eps \eta))C\mu^{-1}\|B\|_{L^p},
\end{align*}
where we used the fact that, since $\Delta$ generates an analytic semigroup, there exists $C>0$ so that for all $\mu>0$ large enough it holds $\|( \Delta-\mu)^{-1}\|\leq C\mu^{-1}$. Therefore, taking first $\eta>0$ sufficiently small, and then $\mu$ sufficiently large, we get that 
\begin{equation*}
\|\nabla \times (u\times (\eps \Delta-\eps\mu)^{-1}B)\|_{L^p} \leq \frac{1}{2}\|B\|_{L^p}
\end{equation*} 
for all $B \in L^p_\omega(\Omega)$. Thus, for this value of $\mu$, $A_2-\eps \mu$ exists and is given by 
\begin{equation*}
(A_2-\eps\mu)^{-1}=(\eps \Delta-\eps\mu)^{-1} \sum_{n \geq 0}(-1)^n (\nabla \times (u \times (\eps \Delta-\eps \mu)^{-1}))^n.
\end{equation*}
We apply this operator now to $\nabla B$,
\begin{equation}
\label{eq:L resolvent expansion}
(A_2-\eps \mu)^{-1}\nabla B= (\eps \Delta-\eps \mu)^{-1} \nabla B+(\eps \Delta-\eps \mu)^{-1}\sum_{n \geq 0}(-1)^{n+1}(\nabla \times (u \times (\eps \Delta-\eps \mu)^{-1}))^n z,
\end{equation}
where $z=\nabla \times (u \times (\eps \Delta-\eps \mu)^{-1} \nabla B)$. In particular, it follows that $\|z\|_{L^p} \leq C\|B\|_{L^p}$, for some constant $C$ independent of $B \in L^p_\omega(\Omega)$. Hence, taking $W^{1,p}(\Omega)$ norms in \eqref{eq:L resolvent expansion}, there holds 
\begin{align*}
\|(A_2-\eps \mu)^{-1}\nabla B\|_{W^{1,p}} & \leq \|(\eps \Delta-\eps \mu)^{-1} \nabla B\|_{W^{1,p}} \\
& \quad +\|(\eps \Delta-\eps \mu)^{-1}\|_{L^p\to W^{1,p}}\sum_{n \geq 0}\|(\nabla \times (u \times (\eps \Delta-\eps \mu)^{-1}))\|^n_{L^p \to L^p}\|z\|_{L^p} \\
& \leq C\|B\|_{L^p},
\end{align*}
with $C$ independent of $B \in L^p_\omega(\Omega)$. In other words, the claim holds true for this particular value of $\mu$. But now, recalling the resolvent identity $R_\mu=R_s+(s-\mu)R_\mu R_s$, for $s,\mu \in \rho(A_2)$, we find
\begin{equation*}
\|(A_2-s)^{-1} \nabla B\|_{W^{1,p}} \leq \|(A_2-\eps\mu)^{-1}\nabla B \|_{W^{1,p}(\Omega)}+|\eps\mu-s| \|(A_2-\eps\mu)^{-1} (A_2-s)^{-1} \nabla B\|_{W^{1,p}}.
\end{equation*}
But we showed that $\|(A_2-\eps \mu)^{-1}B\|_{W^{1,p}} \leq C\|B\|_{L^p}$, so we conclude, using that by definition $(A_2-s)^{-1}:L^p(\Omega) \to L^p(\Omega)$ is bounded, that 
\begin{equation*}
\|(A_2-s)^{-1} \nabla B\|_{W^{1,p}}  
\leq C(\mu,s, \eps)\|B\|_{L^p},
\end{equation*}
for all $B \in L^p_\omega(\Omega)$, and the proof is complete.
\end{proof}

Finally, we need the following classical result on interpolation spaces, which may be found for instance in the monograph \cite{Hairer}.

\begin{lemma}[\cite{Hairer}, Proposition 4.45]
\label{lemma:hairer}
Let $A$ generate an analytic semigroup on some separable Banach space $X$, and suppose $S:(-A)^\alpha \to X$ is a bounded operator, for some $\alpha \in [0,1)$. Then, $A+S$ generates an analytic semigroup, and $\mathcal{D}(((-A-S))^\alpha) \sim \mathcal{D}((-A)^\alpha)$, for all $\alpha \in (0,1)$, i.e.\ the corresponding domains are equal, and the norms $\|(-A-S)^\alpha B\|$, $\|(-A)^\alpha B\|$ are equivalent for any $B \in \mathcal{D}((-A)^\alpha)$.
\end{lemma}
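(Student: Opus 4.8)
This is a classical statement in the perturbation theory of analytic semigroups; it appears, as indicated, in \cite{Hairer}, and I only sketch the argument. To avoid a clash with the free exponent in the conclusion, I write $\gamma\in[0,1)$ for the exponent in the hypothesis on $S$ (denoted $\alpha$ in the statement), so $\|Sx\|\le M\|(-A)^\gamma x\|$ on $\mathcal{D}((-A)^\gamma)$, and I reserve $\beta\in(0,1)$ for the free exponent. After replacing $A$ by $A-c$ for $c>0$ large — which affects neither the analyticity of the semigroup nor the fractional-power domains — I may assume $-A$ is sectorial and boundedly invertible, so that $(-A)^\beta$ is defined for all $\beta\in\R$ and $\|(-A)^\beta(t-A)^{-1}\|\lesssim(1+t)^{\beta-1}$ for $t>0$, $\beta\in[0,1]$. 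Set $B:=A+S$.

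\emph{Step 1: $B$ generates an analytic semigroup.} For $\lambda$ in a sector around the positive real axis one has, at least formally, $(\lambda-B)^{-1}=(\lambda-A)^{-1}\bigl(I-S(\lambda-A)^{-1}\bigr)^{-1}$, which is rigorous as soon as $\|S(\lambda-A)^{-1}\|<1$. But $\|S(\lambda-A)^{-1}\|\le M\|(-A)^\gamma(\lambda-A)^{-1}\|\lesssim|\lambda|^{\gamma-1}\to0$ as $|\lambda|\to\infty$, precisely because $\gamma<1$ — this is why no smallness of $S$ is needed. Hence the Neumann series converges for $|\lambda|\ge R$ large, $\|(\lambda-B)^{-1}\|\lesssim|\lambda|^{-1}$ there, and since moreover $\|Sx\|\le M\|(-A)^\gamma x\|\le\eta\|Ax\|+C_\eta\|x\|$ for every $\eta>0$ (moment inequality), $B$ is closed with $\mathcal{D}(B)=\mathcal{D}(A)$; translating the spectrum once more shows $-B$ is sectorial, hence $(-B)^\beta$ is defined and $\|(-B)^\beta(t-B)^{-1}\|\lesssim(1+t)^{\beta-1}$.

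\emph{Step 2: $\mathcal{D}((-B)^\beta)\subseteq\mathcal{D}((-A)^\beta)$ for $\beta\in(0,1)$.} Combining the Balakrishnan formula $(-A)^{-\beta}=\tfrac{\sin\pi\beta}{\pi}\int_0^\infty t^{-\beta}(t-A)^{-1}\,dt$ (and its analogue for $B$) with the resolvent identity $(t-B)^{-1}-(t-A)^{-1}=(t-A)^{-1}S(t-B)^{-1}$ — legitimate for $t>0$, since $(t-B)^{-1}$ sends $X$ into $\mathcal{D}(B)=\mathcal{D}(A)\subseteq\mathcal{D}(S)$ — gives
\[
(-B)^{-\beta}-(-A)^{-\beta}=\frac{\sin\pi\beta}{\pi}\int_0^\infty t^{-\beta}(t-A)^{-1}S(t-B)^{-1}\,dt .
\]
Applying $(-A)^\beta$ under the integral and factoring $S(t-B)^{-1}=\bigl(S(-A)^{-\gamma}\bigr)\bigl((-A)^\gamma(t-B)^{-1}\bigr)$ — where, as in Step 1, applying $(-A)^\gamma$ to the Neumann series yields $\|(-A)^\gamma(t-B)^{-1}\|\lesssim(1+t)^{\gamma-1}$, hence $\|S(t-B)^{-1}\|\lesssim(1+t)^{\gamma-1}$ — the integrand is controlled in operator norm by $C\,t^{-\beta}(1+t)^{\beta-1}(1+t)^{\gamma-1}$. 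This is integrable on $(0,\infty)$: near $0$ because $\beta<1$, near $\infty$ because $-\beta+(\beta-1)+(\gamma-1)=\gamma-2<-1$, i.e.\ because $\gamma<1$. As $(-A)^\beta$ is closed it commutes with the (strongly convergent) integral, so the latter is bounded $X\to X$; together with $(-A)^\beta(-A)^{-\beta}=I$ this makes $(-A)^\beta(-B)^{-\beta}$ bounded, i.e.\ $\mathcal{D}((-B)^\beta)\subseteq\mathcal{D}((-A)^\beta)$ with $\|(-A)^\beta x\|\lesssim\|(-B)^\beta x\|$.

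\emph{Step 3: reverse inclusion and conclusion.} Step 2 at $\beta=\gamma$ gives $\|(-S)x\|=\|Sx\|\le M\|(-A)^\gamma x\|\lesssim\|(-B)^\gamma x\|$ on $\mathcal{D}((-B)^\gamma)$, so $-S$ is bounded $\mathcal{D}((-B)^\gamma)\to X$. Since $B$ generates an analytic semigroup and $A=B+(-S)$, Steps 1--2 apply verbatim with $A$ and $B$ exchanged, giving $\mathcal{D}((-A)^\beta)\subseteq\mathcal{D}((-B)^\beta)$ with the reverse bound for all $\beta\in(0,1)$; together with the case $\beta=1$ (where $\mathcal{D}(A)=\mathcal{D}(B)$ and the closed graph theorem give the equivalence), this yields $\mathcal{D}((-A-S)^\beta)=\mathcal{D}((-A)^\beta)$ with equivalent norms for all $\beta\in(0,1)$. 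The delicate point is the bookkeeping in Step 2: one must track the various resolvent and fractional-power factors simultaneously as $t\to0^+$ and $t\to\infty$, observe that it is exactly the two strict inequalities $\beta<1$ and $\gamma<1$ that make the integral converge at the two endpoints, and justify commuting the closed operator $(-A)^\beta$ past the Bochner integral (here separability of $X$ keeps the measurability routine). Everything else is standard sectorial functional calculus.
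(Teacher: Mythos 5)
The paper does not prove this lemma: it is stated as a citation to \cite{Hairer}, Proposition~4.45, and then invoked in the proof of Lemma~\ref{lemma:abstract semigroup lemma}. Your proposal therefore goes beyond what the paper offers, supplying a full proof of the cited result, and the argument you give is the standard one. The three steps are all sound: the Neumann-series resolvent estimate in Step~1 (exploiting the relative bound with exponent $\gamma<1$ so that no smallness of $S$ is required), the comparison of fractional powers in Step~2 via the Balakrishnan integral and the second resolvent identity $(t-B)^{-1}-(t-A)^{-1}=(t-A)^{-1}S(t-B)^{-1}$, and the bootstrap in Step~3 that uses Step~2 at $\beta=\gamma$ to see that $-S$ is $(-B)^\gamma$-bounded and then runs the same machine with $A$ and $B$ exchanged. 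The integrability bookkeeping $t^{-\beta}(1+t)^{\beta-1}(1+t)^{\gamma-1}$ is exactly right, with $\beta<1$ handling $t\to 0^+$ and $\gamma<1$ handling $t\to\infty$, and the passage of the closed operator $(-A)^\beta$ through the Bochner integral is correctly justified. Two small remarks: your Step~3 use of ``Step~2 at $\beta=\gamma$'' tacitly assumes $\gamma\in(0,1)$; the case $\gamma=0$ should be noted as trivial (then $S$ is bounded on $X$). Also, for $t$ in a bounded interval the Neumann series for $(t-B)^{-1}$ need not converge, so the bound $\|(-A)^\gamma(t-B)^{-1}\|\lesssim(1+t)^{\gamma-1}$ there should instead be read off from the continuous embedding $\mathcal{D}(B)=\mathcal{D}(A)\hookrightarrow\mathcal{D}((-A)^\gamma)$ and compactness of the $t$-interval; this is implicit in your Step~1 but worth flagging. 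Neither point affects the validity of the argument.
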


With this tool in our hand, we can now commence with the proof of the technical Lemma \ref{lemma:abstract semigroup lemma}.

\begin{proof}[Proof of Lemma \ref{lemma:abstract semigroup lemma}]
In view of the results in \cite{Friedlander_Pavlović_Shvydkoy_2006} for the linearised Navier--Stokes equations, it only remains to prove the analogous statement for $A_2$, i.e.\ the linearised magnetic operator. The first claim of Lemma \ref{lemma:abstract semigroup lemma} is true for any analytic semigroup, and so follows immediately from Lemma \ref{lemma:analytic semigroup}. Next, note that $\nabla \times (u_\TC \times \cdot)$ is a bounded operator from $W^{1,p}(\Omega)$ to $L^p(\Omega)$. Hence, for any $B \in \mathcal{D}((\eps \Delta-c_0)^{1/2})$, for any $c_0>0$ large enough there holds 
$$
\|\nabla \times (u_\TC \times B)\|_{L^p(\Omega)} \leq C\|B\|_{W^{1,p}(\Omega)} \leq C\|(\eps\Delta -c_0)^{1/2}B\|_{L^p},
$$
where the last inequality is from \eqref{eq:interpolation inequality}.
Hence, $S:=\nabla \times (u_\TC \times \cdot)$ is indeed bounded from $\mathcal{D}((\eps \Delta -c_0)^{1/2})$ to $L^p_\omega(\Omega)$. Thus, by Lemma \ref{lemma:hairer}, for all $\alpha \in [0,1]$ there exists $C(\alpha)$ so that 
\begin{equation*}
\|\Delta^\alpha B\|_{L^p} \leq C(\alpha)\|(A_2-\mu)^\alpha B\|_{L^p}.
\end{equation*}
Thus, let now $B \in L^q(\Omega)$ with $q \leq 3p/(3-(2\alpha-1)p)$ and $1/2<\alpha<1$. Using a density argument, we can assume that $B$ is smooth. Then, for $\mu \in \rho(A_2)$ it holds
\begin{equation}
\label{eq:final estimate 1}
\|(A_2-\mu)^{-\alpha} \nabla B\|_{L^q} \leq C\|(A_2-\mu)^{-\alpha} \nabla B\|_{W^{2\alpha-1,p}},
\end{equation}
from the Sobolev inequality. But then, we estimate 
\begin{equation}
\begin{split}
\label{eq:final estimate 2}
\|(A_2-\mu)^{-\alpha} \nabla B\|_{W^{2\alpha-1,p}} & = \|(A_2-\mu)^{1-\alpha}(A_2-\mu)^{-1} \nabla B\|_{W^{2\alpha-1,p}}\\
& \leq C \|(A_2-\mu)^{\alpha -1/2}(A_2-\mu)^{1-\alpha}(A_2-\mu)^{-1} \nabla B\|_{L^p} \\
& \leq  C\|(A_2-\mu)^{1/2}(A_2-\mu)^{-1} \nabla B\|_{L^p},
\end{split}
\end{equation}
where we used \eqref{eq:interpolation inequality} in the second to last inequality. But as noted already in \cite{Miyakawa_1980}, $\mathcal{D}(\Delta^{1/2}) \subset W^{1,p}(\Omega)$, with continuous injection, and by Lemma \ref{lemma:hairer}, the same holds for $A_2$. Thus, there exists a constant $C>0$ so that $\|(A_2-\mu)^{1/2}B\|_{L^p} \leq C\|B\|_{W^{1,p}}$. Hence, we deduce from \eqref{eq:final estimate 1} and \eqref{eq:final estimate 2} that 
\begin{equation}
\|(A_2-\mu)^{-\alpha} \nabla B\|_{L^q}\leq C\|(A_2-\mu)^{-1} \nabla B\|_{W^{1,p}} \leq C \|B\|_{L^p},
\end{equation}
where the last inequality follows by Lemma \ref{lemma:analytic semigroup}.
\end{proof}

\section{Proof of nonlinear instability}\label{s:proof}

We now have all the ingredients required to prove the claimed nonlinear instability result around the Taylor--Couette flow. We follow very closely the method of \cite{Friedlander_Pavlović_Shvydkoy_2006}, making the necessary adaptations to handle the addition of the magnetic field component.
\begin{proof}[Proof of Theorem \ref{theorem}]
Pick the growing mode $\phi_0$ of $A$ corresponding to the eigenvalue with largest real part, which we call $\lambda$. A growing mode $\phi_0$ of the form $(0,B_0)$ with $\Re(\lambda) \sim \eps^{1/3}>0$ is ensured to exist by Proposition \ref{prop:dynamo}. Via \eqref{eq:eq-perturbation} and the mild formulation, we write 
\begin{equation}\label{eq:w-proof}
w(t) = \begin{pmatrix}
    v(t)\\
    B(t)
\end{pmatrix} =\e^{A t}\delta\phi_0+\int_0^t \e^{A (t-\tau)}\begin{pmatrix}
-N(v,v)+N(B,B)\\
M(v,B)
\end{pmatrix}(\tau)
\dd \tau.
\end{equation}
We want to show that we the nonlinear instability mechanism is directly produced by the linear part of the formulation---that is the first term in the right hand side. In this regard, we start by looking at the nonlinear addend, that for some $q>1$ to be chosen later, can be estimated via Lemma \ref{lemma:abstract semigroup lemma} by
\begin{equation*}
\begin{split}
& \left \|\int_0^t \e^{A (t-\tau)}\begin{pmatrix}
-N(v,v)+N(B,B)\\
M(v,B)
\end{pmatrix}(\tau)
\dd \tau \right \|_{L^q}\\
& \qquad \leq \int_0^t \e^{(\lambda+\eta)(t-\tau)} \left\|A_\eta^\alpha e^{A_\eta (t-\tau)} A^{-\alpha}_\eta \begin{pmatrix}
-N(v,v)+N(B,B)\\
M(v,B)
\end{pmatrix}\right\|_{L^q} \dd \tau\\
& \qquad \lesssim \int_0^t \e^{(\lambda+\eta)(t-\tau)} \frac{1}{(t-\tau)^\alpha} \left \| A_\eta^{-\alpha} \begin{pmatrix}
-N(v,v)+N(B,B)\\
M(v,B)
\end{pmatrix}\right \|_{L^q} \dd \tau.
\end{split}
\end{equation*}
Moreover, by letting $\alpha\in (1/2,1)$ and $q>3/(2\alpha-1)$, we can use the second claim from Lemma \ref{lemma:abstract semigroup lemma} to write
\[
\|A_\eta^{-\alpha}N(v,v)\|_{L^q} \leq \|A_\eta^{-\alpha}\nabla (v \otimes v)\|_{L^q}\lesssim \|v\otimes v\|_{L^{q/2}} \lesssim \|v\|_{L^q}^2.
\]
Analogously, we bound $\|A^{-\alpha}_\eta N(B,B)\|_{L^q} \lesssim \|B\|_{L^q}^2$, and $\|A^{-\alpha}_\eta M(v,b)\|_{L^q} \lesssim \|B\|_{L^q}\|v\|_{L^q}$, which in particular yields the estimate
\[
\left \|\int_0^t \e^{A (t-\tau)}\begin{pmatrix}
-N(v,v)+N(B,B)\\
M(v,B)
\end{pmatrix}(\tau)
\dd \tau \right \|_{L^q} \lesssim \int_0^t \e^{(\lambda+\eta)(t-\tau)} \frac{1}{(t-\tau)^\alpha}\|w(\tau)\|_{L^q}^2 \dd \tau.
\]
For any $Q >\|\phi_0\|_{L^q}$, we now define a time $T\in (0,\infty]$ given by
\begin{equation}\label{eq:maximalT}
T=\inf\{t \geq 0: \|w(t)\|_{L^q}\geq \e^{\lambda t}\delta Q\}.
\end{equation}
Hence, for any $t \leq T$, there holds 
\begin{equation*}
\int_0^t \e^{(\lambda+\eta)(t-\tau)} \frac{1}{(t-\tau)^\alpha}\|w(\tau)\|_{L^q}^2 \dd \tau \leq Q^2\delta^2\int_0^t\e^{(\lambda+\eta)(t-\tau)} \frac{1}{(t-\tau)^\alpha} \e^{2\lambda \tau } \dd \tau \lesssim Q^2 \delta^2 \e^{2\lambda t},
\end{equation*}
for all $t \leq T$, provided that we choose $\eta$ small enough. All in all, we found that there exists a constant $C>0$, such that the $L^q$ norm of the nonlinear part is bounded by
\[
\left \|\int_0^t \e^{A (t-\tau)}\begin{pmatrix}
-N(v,v)+N(B,B)\\
M(v,B)
\end{pmatrix}(\tau)
\dd \tau \right \|_{L^q} \leq CQ^2 \delta^2 \e^{2\lambda t}
\]
for any $t\leq T$. This result holds true for $q>3/(2\alpha-1)$, $\alpha\in(1/2,1)$, which in turn implies that $q>3$. However, in order to obtain a similar estimate for all $p>1$, we can simply make use of the fact that the domain is bounded to write $\|\cdot \|_{L^p}\leq C'\|\cdot \|_{L^q}$.
Going back to \eqref{eq:w-proof} and using the definition of $T$ in \eqref{eq:maximalT}, we find that at time $t=T$ there holds
\[
\delta Qe^{\lambda T}=\|w(T)\|_{L^q} \leq \|\phi_0\|_{L^q}\delta e^{\lambda T}+CQ^2 \delta^2 e^{2\lambda T}.
\]
Here we are formally assuming that $T<\infty$, but the inequality holds true for any $t\leq T$ by \eqref{eq:maximalT}. Thus, we find that there exists a constant $\chi_\star>0$ with the property
\begin{equation*}
\delta \e^{\lambda T} \geq \frac{Q-\|\phi_0\|_{L^q}}{CQ^2} =: \chi_\star.
\end{equation*}
Therefore, there must exist some $t_* <T$ so that $\delta e^{\lambda t_*}=\chi_\star$. 
Evaluating \eqref{eq:w-proof} at time $t=t_\star$, taking the $L^p$ norm and using the triangle inequality we can write
\begin{equation*}
\|w(t_\star)\|_{L^p} \geq \delta e^{\lambda t_\star}\|\phi_0\|_{L^p} - C'CQ^2 \delta^2 e^{2\lambda t_\star} \geq \chi_\star\|\phi_0\|_{L^p} - CC'Q^2 \chi_\star^2 \geq \chi_\star(\|\phi_0\|_{L^p}-CC'Q^2 \chi_\star).
\end{equation*}
All in all, picking $Q$ sufficiently close to $\|\phi_0\|_{L^q}$, we deduce that 
\begin{equation*}
\|w(t_\star)\|_{L^p} \geq \frac{1}{2}\chi_\star,
\end{equation*}
completing the proof. To deduce that the initial perturbation may be in $W^{s,p}$, note that by elliptic regularity, any eigenfunction of $A$ will have $W^{k,p}$ norm comparable to its $L^p$ norm, and so the result follows immediately by the above.

Furthermore, we readily see that if the growing mode $\phi_0$ of $A$ corresponding to the eigenvalue with largest real is of the form $\phi_0=(0,B_0)$, then the growth occurs for the magnetic component and we can write
\[
\|B(t_\star)\|_{L^p} \gtrsim \|w(t_\star)\|_{L^p}\geq \frac{1}{2}\chi_\star
\]
as claimed. In particular, via Proposition \ref{prop:dynamo} we see that this will be the case if $\nu>0$ is large when compared to $\|u_\TC\|_{W^{1,\infty}}$, since via Poincaré we can write
\[
\|A_1v\|_{L^2} \leq - (C\nu-\|u_\TC\|_{W^{1,\infty}})\|v\|_{L^2}
\]
for some constant $C>0$. Since any eigenfunction of $A_1$ in $L^p$ must as well be in $L^2$, we conclude that no growing mode can exist for $A_1$ in $L^p$ if $\nu\gg \|u_\TC\|_{W^{1,p}}$.
\end{proof}

\addtocontents{toc}{\protect\setcounter{tocdepth}{0}}
\section*{Acknowledgements}
The authors thank Rajendra Beekie, Niklas Knobel and Michele Coti Zelati for insightful comments.
The research of VNF is funded by the ERC-EPSRC Horizon Europe Guarantee EP/X020886/1. The research of DV is funded by the Imperial College President's PhD Scholarships.

\addtocontents{toc}{\protect\setcounter{tocdepth}{1}}
\bibliographystyle{abbrv}
\bibliography{dynamo.bib}
\end{document}